\documentclass[12pt]{article}
\usepackage{graphicx} 
\usepackage[spanish,english]{babel}
\usepackage[top=1in, left=0.7 in, right=0.6 in,bottom=1in]{geometry}

\usepackage{natbib}
\usepackage{amsfonts,amssymb,amsmath,amsthm}
\usepackage{mathtools}
\mathtoolsset{showonlyrefs}
\newtheorem{theorem}{Theorem}
\theoremstyle{definition}
\newtheorem{proposition}{Proposition}
\theoremstyle{definition}
\newtheorem{lemma}{Lemma}
\newtheorem{definition}{Definition}
\newenvironment{lemmaproof}
  {\begin{proof}}
  {\end{proof}}

  \providecommand{\keywords}[1]
{
  \small	
  \textbf{\textit{Keywords---}} #1
}
\providecommand{\subjclass}[1]
{
  \small	
  \textbf{\textit{MSC numbers---}} #1
}

\title{Carleman Inequalities for the Heat Equation with Fourier Boundary Conditions: Applications to  Null Controllability Problems}
\author{Jose Antonio Villa }
\date{October 2025}

\begin{document}

\maketitle

\begin{abstract}
   In this work, we establish a Carleman inequality for the heat equation with Fourier boundary conditions of the form $\partial_\nu y+by=f1_\gamma$, where the control acts on a small portion $\gamma$ of the boundary. We apply this inequality to address the null controllability problem with boundary control supported on this small region. An explicit solution to this problem is obtained via a system of coupled parabolic equations. Based on these results, we propose an iterative numerical method to solve the coupled system. 
\end{abstract}
\subjclass{35K,35Q, 49K20, 49N15 }

\keywords{ Carleman inequalities, Parabolic PDEs, Null Controllability}

\section{Introduction}

The controllability of parabolic equations has been extensively studied during the last decades, due to its theoretical interest and its relevance in applications. In particular, Carleman inequalities have become a fundamental tool in the analysis of controllability properties for parabolic and hyperbolic equations. These inequalities allow one to derive observability estimates for adjoint systems, which in turn imply controllability results for the original equations. 

Let $\Omega\subset \mathbb{R}^n$ be an open set with boundary $\Gamma$ of class $C^2$. Construct the cylinder $Q=\Omega \times (0,T)$ and the cylinder shell $\Sigma = \Gamma \times (0,T)$. Let $A\in W^{1,\infty}(Q,\mathbb{R}^n)$, $a\in L^\infty(Q)$, $g\in L^2(Q)$, $f\in L^2(\Sigma)$, $b\in L^\infty(\Sigma)$ and $y_0\in L^2(\Omega)$.  Define the linear equation 
\begin{equation}\label{eq0}
    \begin{array}[c]{lll}
        y_t-\Delta y  +A\cdot \nabla y+a y =g & \text{in} & Q, \\
       \partial_\nu y+by= f & \text{on} 
       &\Sigma, \\
       y(0)=y_0  & \text{in} &\Omega.\\
    \end{array}
\end{equation}

This paper is devoted  to deriving  Carleman inequalities localized in a small  control region $\gamma\subset \Gamma$ of the adjoint equation associated to \eqref{eq0} with the initial condition $p_0\in L^2(\Omega)$  defined as
\begin{equation}\label{adjeq0}
    \begin{array}[c]{lll}
        -p_t-\Delta p -\text{div}(Ap)+ap=0& \text{in} &  Q, \\
       \partial_\nu p+pA\cdot\nu+bp=0 & \text{on} &\Sigma, \\
       p(T)=p_0  & \text{in} & \Omega.\\
    \end{array}
\end{equation}
The desired  Carleman inequality is established  in Theorem 1 for a more general adjoint system, which is considered with the initial condition $\psi_0\in L^2(\Omega)$ and is given by 
\begin{equation}\label{adjeq}
    \begin{array}[c]{lll}
        -\psi_t-\Delta \psi =\nabla \cdot F_2 +F_1& \text{in} &  Q, \\
       \partial_\nu \psi+\psi F_2\cdot \nu=F_3 & \text{on} &\Sigma, \\
       \psi(T)=\psi_0  & \text{in} & \Omega,\\
    \end{array}
\end{equation}
  
 where $\nabla \cdot F_2+F_1\in L^2(Q),F_3\in L^2(\Sigma)$. Although our main motivation comes from the adjoint equation associated with \eqref{eq0}, it is convenient to work with the slightly more general adjoint system \eqref{adjeq}.  Let $\eta\in C^\infty(\overline{\Omega})$ and define the weight functions $\alpha$ and $\xi$ by

\begin{equation}\label{weigh}
\alpha(t,x) =  \frac{ e^{2\lambda \|\eta\|_{\infty}} - e^{\lambda \eta(x)} }{t(T-t)}, \hspace{0.5cm}
\xi(t,x) =\frac{ e^{\lambda \eta(x)}}{t(T-t)}, 
\qquad (t,x) \in \overline{Q}.
\end{equation}

In \cite{fernandez2006null}, the authors derive Carleman inequalities for the system \eqref{adjeq} with an interior control  region $\omega\subset \Omega$.  The key idea is that the cut-off  function allows one to control all  boundary terms through the conditions  $\eta|_{\Sigma}=0$, $\eta>0$ on $\Omega$  and $\partial_\nu\eta<-c_0$ on $\Gamma$. As a consequence, boundary integrals  involving $\partial_\nu\eta$ can be controlled by $\omega$. By contrast, in the boundary case considered here, this task becomes more delicate. Indeed, it is necessary  to construct a new function $\eta$ that behaves in a suitable way through on $\Gamma$. This constitutes one of the main technical difficulties that arise when working with boundary conditions.

The main application of  the resulting Carleman inequality is to address the null controllability problem for \eqref{eq0}, where the controls act precisely in the control region $\gamma$ on the boundary. 

\begin{definition}
    Control $f\in L^2(\Sigma)$  solves the null controllability problem with time $T>0$, if the solution $y$ to \eqref{eq0} satisfies
    \begin{equation}\label{null}
        y(T)=0 \,\,\, \text{on}\,\,\, \Omega.
    \end{equation}
  
\end{definition}
The second application is the solution of the semi-linear problem.  Let  $F\in C^1(\mathbb{R})$ be a Lipschitz function.  Define the initial value problem
\begin{equation}\label{eq1}
    \begin{array}[c]{lll}
        y_t-\Delta y +F(y) =g & \text{in} \, Q, \\
       \partial_\nu y+by= f1_\gamma & \text{on} \,\Sigma, \\
       y(0)=y_0  & \text{in} \, \Omega.\\
    \end{array}
\end{equation}

The main results on controllability are stated in Theorem \ref{control1} in the next section, where the existence of the solution  is given for any positive time $T$.  Observe that \eqref{null} can be formulated as a trajectory  controllability problem, by means of a suitable change of variables, in order to obtain $y(T)=\bar{y}$ where $\bar{y}$  is the solution to the homogeneous form of \eqref{eq1} with the initial boundary and the initial condition.

\textbf{Remark:} When nonlinearities of the form $F(y,\nabla y)$ are considered, the linearization process leads to first-order terms in the adjoint equation. The treatment of such terms requires additional structural assumptions and delicate estimates beyond the scope of the present work. For this reason, we restrict our semi-linear analysis to nonlinearities depending only on the state variable. 

The physical context where the Fourier boundary conditions work arise in the study of the heat flux. Let $\Omega\subset \mathbb{R}^n$. Consider the heat flux $q$, and $F$ heat sources. According to  Fourier law \footnote{With $k=b$} $q=-k\nabla y$, it can be deduced that
\begin{equation}\label{foulaw}
    q=-k\nabla y.
\end{equation}
 Suppose that $\Omega$ is a hot body surrounded by a cooler liquid with homogeneous temperature $T_{\infty}$. When $q=0$,  the body is isolated and no heat exchange occurs.  
 \begin{enumerate}
     \item   Thus, condition $\partial_\nu y=-by$ means that the heat that crosses the boundary is proportional to its temperature on $\Gamma$. 
     \item Involving a control $f$ in the small region $\gamma$ has the objective of compensating the heat loss on the boundary,  having an equilibrium via $\partial_\nu y=-by+f1_\gamma$ on $\gamma$. This reflects the idea that the heat loss in the boundary is controlled by the source $f$ in that region.
 \end{enumerate}
 
\subsection{Main results}
We now present the two main theorems of the paper. The first one is  a boundary Carleman inequality for non-homogeneous Fourier boundary conditions. The second one addresses the solution to the null controllability as a consequence of  Theorem 1. To state the first result, we first introduce the following auxiliary lemma.
\begin{lemma}\label{lem1}
Given a non-empty  open set $\gamma \subset \Gamma$, there exists a function $\eta \in C^{2}(\overline{\Omega})$ such that
\begin{align}
\text{(i)}\quad & \eta > 0 && \text{in } \Omega, \\
\text{(ii)}\quad & |\nabla \eta| \ge c_{0} > 0 && \text{in } \overline{\Omega}, \\
\text{(iii)}\quad & \partial_{\nu} \eta \le -c_{0} && \text{on } \Gamma \setminus \gamma, \\
\text{(iv)}\quad & \eta = 0,\ \nabla_{\Gamma} \eta = 0 && \text{on } \Gamma \setminus \gamma.
\end{align}
\end{lemma}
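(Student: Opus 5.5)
The plan is to use the classical Fursikov--Imanuvilov construction of a weight for an \emph{interior} control region, but on an enlarged domain $\widetilde\Omega$ that bulges out of $\Omega$ only through $\gamma$. We then restrict the resulting weight to $\overline\Omega$. The point is that all critical points of the weight can be placed in $\widetilde\Omega\setminus\overline\Omega$, so (ii) holds on all of $\overline\Omega$, while $\Gamma\setminus\gamma$ is still part of the boundary of $\widetilde\Omega$, where the weight vanishes.

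\emph{Step 1: the extended domain.} Fix $x_0\in\gamma$. Near $x_0$, choose local coordinates in which $\Gamma$ is the graph $x_n=h(x')$ of a $C^2$ function, with $\Omega$ lying locally below the graph. Take a small disc $D$ in the $x'$ variables such that the graph over $\overline D$ lies in $\gamma$. Pick $\varphi\in C^\infty_c(D)$ with $\varphi\ge0$ and $\varphi\not\equiv0$, and small enough in $C^0$ to remain inside the chart. Define $\widetilde\Omega$ to be $\Omega$ together with the region between the graphs of $h$ and $h+\varphi$ over $D$.

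\begin{itemize}
\item $\widetilde\Omega$ is a bounded domain of class $C^2$ and $\Omega\subset\widetilde\Omega$.
\item $\widetilde\Omega\setminus\overline\Omega$ contains a nonempty open set $\omega_0$, which we fix with $\overline{\omega_0}\subset\widetilde\Omega\setminus\overline\Omega$.
\item $\partial\widetilde\Omega$ coincides with $\Gamma$ in a neighbourhood of every point of $\Gamma\setminus\gamma$, because $\operatorname{supp}\varphi$ is compact in $D$ and its graph lies in $\gamma$.
\end{itemize}

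Consequently the outward normals of $\Omega$ and $\widetilde\Omega$ agree on $\Gamma\setminus\gamma$. I expect this geometric step to be the main technical obstacle, mostly bookkeeping: one must check $C^2$ regularity and the local coincidence of the boundaries. Using a single graph bump, as above, avoids any smoothing of corners.

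\emph{Step 2: interior weight on $\widetilde\Omega$.} Apply the standard lemma of Fursikov--Imanuvilov (as used in \cite{fernandez2006null}) to the pair $(\widetilde\Omega,\omega_0)$. It gives $\widetilde\eta\in C^2(\overline{\widetilde\Omega})$ with
\begin{itemize}
\item $\widetilde\eta>0$ in $\widetilde\Omega$,
\item $\widetilde\eta=0$ on $\partial\widetilde\Omega$,
\item $|\nabla\widetilde\eta|>0$ on $\overline{\widetilde\Omega}\setminus\omega_0$.
\end{itemize}
Its construction is recalled briefly: start from a Morse function with zero boundary values, then move its finitely many critical points into $\omega_0$ by a diffeomorphism isotopic to the identity that fixes a neighbourhood of the boundary.

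\emph{Step 3: restriction and verification.} Set $\eta=\widetilde\eta|_{\overline\Omega}$.
\begin{itemize}
\item (i) holds because $\Omega\subset\widetilde\Omega$.
\item (ii) holds because $\overline\Omega\cap\omega_0=\emptyset$, so $|\nabla\eta|>0$ on the compact set $\overline\Omega$; continuity then gives a positive minimum $c_1$.
\item (iv): on $\Gamma\setminus\gamma\subset\partial\widetilde\Omega$ we have $\eta=0$, and since $\eta$ is constant along $\partial\widetilde\Omega$, which locally equals $\Gamma$, we get $\nabla_\Gamma\eta=0$.
\item (iii): on $\Gamma\setminus\gamma$ we then have $\nabla\eta=(\partial_\nu\eta)\nu$. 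Because $\eta>0$ inside and $\eta=0$ on the boundary, $\partial_\nu\eta\le0$, hence $\partial_\nu\eta=-|\nabla\eta|\le-c_1$.
\end{itemize}
Taking $c_0=c_1$ gives all four properties.
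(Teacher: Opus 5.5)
Your construction is correct, and it takes a genuinely different route from the paper, because the paper never actually constructs $\eta$. The appendix titled ``proof of Lemma~\ref{lem1}'' conjugates $\psi=e^{-s\alpha}p$, splits the operator into $M_1,M_2,N_1,N_2$, and estimates the resulting interior and boundary integrals. In other words, it sketches the Carleman estimate of Lemma~\ref{propfi}. It \emph{uses} properties (i)--(iv) rather than establishing them, for instance to sign $-s^3\lambda^3\int_\Sigma|\nabla\eta|^2\xi^3\partial_\nu\eta\,\psi^2\,d\Sigma$ away from $\gamma$.

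Your argument supplies the missing existence proof by the standard device for boundary observation: enlarge $\Omega$ through $\gamma$ only, place the interior observation set $\omega_0$ in $\widetilde\Omega\setminus\overline\Omega$, take the interior Fursikov--Imanuvilov weight on $(\widetilde\Omega,\omega_0)$, and restrict. This reduces the lemma to a known result plus elementary geometry. It also yields slightly more than stated: $\eta$ and $\nabla_\Gamma\eta$ vanish on an open neighbourhood of $\Gamma\setminus\gamma$ in $\Gamma$, and $\nabla\eta\neq0$ on $\gamma$ as well.

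Two points should be made explicit. First, you are assuming $\Omega$ is bounded and connected, which the paper does not state. Connectedness is essential: on a bounded component whose boundary lies in $\Gamma\setminus\gamma$, (i) and (iv) force an interior maximum of $\eta$, contradicting (ii). Connectedness is also what allows the critical points to be pushed into $\omega_0$. Second, in the chart, define $\widetilde\Omega$ as $\{x_n<h(x')+\varphi(x')\}$ rather than as $\Omega$ united with the open region strictly between the graphs. That way the graph points of $h$ over $\{\varphi>0\}$ are included and $\widetilde\Omega$ is open.
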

Let $\gamma \subset \Gamma$ be an open and nonempty set, and let $\eta$ be as above.  The weights $\alpha$ and $\xi$ given in \eqref{weigh} are of class $C^{2}$, strictly positive in $\overline{Q}$, and blow-up at $t=0$ and $t=T$. Define the weight functions 
\begin{equation}
    \varrho=e^{s\alpha},\varrho_1=s^{-1/2}\lambda^{-1}\xi^{-1/2}\varrho,\varrho_0=s^{-3/2}\xi^{-3/2}\lambda^{-2}\varrho,\varrho_2=s^{-1}\lambda^{-1}\xi^{-1}\varrho, \varrho_3=s^{-1/2}\lambda^{-1/2}\xi^{-1/2}\varrho.
\end{equation}

\begin{theorem}\label{carleman1}
    Let $F_1,F_2,F_3$ be such that $\nabla F_2+F_1\in L^2(Q)$, $F_3\in L^2(\Sigma)$  and $\psi_0\in L^2(\Omega)$. Let $\psi$ a solution to \eqref{adjeq}. Then there exist $\lambda_*>0$, $s_*>0$ and a constant $C(\Omega,\gamma)$ such that for any $\lambda>\lambda_*$ and $s>s_*$ the next Carleman inequality holds
    \begin{equation}\label{carl2}
    \begin{array}[c]{lll}
       \displaystyle \int_Q\varrho_1^{-2}|\nabla \psi|^2\,dxdt &+ \displaystyle  \int_Q\varrho_1^{-2}|\psi|^2\,dxdt+  \int_\Sigma\varrho_0^{-2}|\psi|^2\,d\Sigma \leq C(\Omega,\gamma)\left(\int_Q\varrho^{-2}|\psi_t+\Delta \psi|\,dxdt \right. \\
         &\displaystyle  +\int_Q\varrho^{-2}|F_1|^2\,dxdt+\int_Q\varrho_2^{-2}|F_2|^2\,dxdt +\int_{\Sigma}\varrho_3^{-2}|F_3|^2\,d\Sigma \\
         &\displaystyle+\int_{\gamma \times (0,T)}\varrho_0^{-2}|\psi|^2\,dxdt\Big). 
    \end{array}
\end{equation}
\end{theorem}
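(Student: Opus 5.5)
We follow the Fursikov--Imanuvilov strategy, in the form used in \cite{fernandez2006null} for interior control regions, now with the weight $\eta$ of Lemma \ref{lem1}. The plan has three layers. First, a Carleman estimate for the pure heat operator with an $L^2$ right-hand side and a general Neumann datum. Second, absorption of the divergence term $\nabla\cdot F_2$ by a duality (or $H^{-1}$-type) argument. Third, a localization step that turns the boundary terms on $\gamma$ into the observation $\int_{\gamma\times(0,T)}\varrho_0^{-2}|\psi|^2$.

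\textbf{Step 1 (basic estimate).} Set $w=e^{-s\alpha}\psi=\varrho^{-1}\psi$, which vanishes at $t=0,T$ because $\alpha$ blows up there. Write
\[
P_s w := e^{-s\alpha}(-\partial_t-\Delta)(e^{s\alpha}w)
\]
and split it into its symmetric part $P_1w$ and antisymmetric part $P_2w$. The symmetric part contains $-\Delta w - s^2\lambda^2\xi^2|\nabla\eta|^2 w$; the antisymmetric part contains $-w_t + 2s\lambda\xi\nabla\eta\cdot\nabla w$ and lower-order terms. We expand $\|P_1w\|^2+\|P_2w\|^2+2(P_1w,P_2w)=\|e^{-s\alpha}(\cdot)\|^2$ and integrate the cross term by parts.
\begin{itemize}
\item Interior terms: because $|\nabla\eta|\ge c_0$ on all of $\overline{\Omega}$ (property (ii)), these are positive, of order $s^3\lambda^4\xi^3|w|^2+s\lambda^2\xi|\nabla w|^2$, once $\lambda\ge\lambda_*$ and $s\ge s_*(T+T^2)$. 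No interior observation set is needed.
\item Boundary terms: these involve $\partial_\nu\eta$, $\nabla_\Gamma\eta$ and $\partial_\nu w$. We substitute $\partial_\nu w=\varrho^{-1}(F_3-\psi F_2\cdot\nu)-s\lambda\xi\partial_\nu\eta\, w$.
\end{itemize}

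\textbf{Step 2 (boundary terms).} On $\Gamma\setminus\gamma$, property (iv) gives $\eta=0$ and $\nabla_\Gamma\eta=0$, so the weights are constant in $x$ there. Property (iii) gives $\partial_\nu\eta\le -c_0$, so the dominant term $-s^3\lambda^3\xi^3\partial_\nu\eta|w|^2$ has a good sign and yields the boundary term $\int_\Sigma s^3\lambda^3\xi^3|w|^2$, which is exactly the $\varrho_0^{-2}|\psi|^2$ term. The remaining terms (mixed products of $w$, $\nabla_\Gamma w$ and $F_3$) are handled with Young's inequality:
\begin{itemize}
\item products with $F_3$ are absorbed into $\int_\Sigma\varrho_3^{-2}|F_3|^2$ plus a small multiple of the good boundary term;
\item tangential-gradient terms are treated via integration by parts on $\Gamma$.
\end{itemize}
On $\gamma$, $\partial_\nu\eta$ has no sign. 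All terms there are bounded by $C s^3\lambda^3\xi^3|w|^2$ plus absorbable quantities. Terms involving $\nabla_\Gamma w$ on $\gamma$ are removed by a tangential integration by parts against a cutoff supported near $\gamma$, at the price of a slightly enlarged but still boundary-localized observation. We then return to $\psi$ and insert $F_1$ into the right-hand side.

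\textbf{Step 3 ($F_2$ and the gradient term).} The term $\nabla\cdot F_2$ and the boundary flux $\psi F_2\cdot\nu$ are treated as in \cite{fernandez2006null}, by a weak formulation. Write $\psi=\psi_1+\psi_2$, where $\psi_2$ solves a weighted auxiliary problem carrying $\nabla\cdot F_2$ with the compatible flux. Energy estimates in weighted spaces give
\[
\|\varrho_1^{-1}\psi_2\|+\|\varrho_1^{-1}\nabla\psi_2\|\le C\|\varrho_2^{-1}F_2\|,
\]
using $s\lambda\xi\ge 1$. The function $\psi_1$ then satisfies the $L^2$ case treated in Steps 1--2. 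Finally, the $|\nabla\psi|^2$ and $|\psi|^2$ terms with weight $\varrho_1^{-2}$ follow from the $s\lambda^2\xi|\nabla w|^2$ and $s^3\lambda^4\xi^3|w|^2$ terms, after undoing the conjugation and noting that $\varrho_1^{-2}\le$ these weights.

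\textbf{Main obstacle.} The hardest part is the boundary analysis on $\gamma$. There $\eta$ need not vanish and $\nabla_\Gamma\eta\ne0$, so $\alpha$ varies tangentially. The tangential terms $s\lambda\xi\nabla_\Gamma\eta\cdot\nabla_\Gamma w\,\partial_\nu w$ must be bounded using only $|w|^2$ on $\gamma$, without introducing $\|\nabla_\Gamma w\|$ with a bad power of $s$. The first approach to try is a tangential integration by parts combined with a second cutoff, accepting a loss of powers of $s\lambda\xi$ that is compensated by the choice of $\varrho_0$.
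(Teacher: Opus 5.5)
Your proposal has a genuine gap in Steps 1--2: the boundary analysis of the direct conjugation does not close, and the failure is not confined to $\gamma$.

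First, once $\partial_\nu w$ contains $e^{-s\alpha}F_3$, two cross terms produce uncontrolled boundary integrals. The cross term between $-\Delta w$ and $-w_t$ produces $2\int_\Sigma e^{-s\alpha}F_3\,\partial_t w\,d\Sigma$. The cross term between $-\Delta w$ and $2s\lambda\xi\nabla\eta\cdot\nabla w$ produces $-4s\lambda\int_\Sigma \xi e^{-s\alpha}F_3\,\nabla\eta\cdot\nabla w\,d\Sigma$, whose tangential part $\nabla_\Gamma\eta\cdot\nabla_\Gamma w$ lives on $\gamma$. Nothing on your left-hand side controls $\partial_t w$ or $\nabla_\Gamma w$ on $\Sigma$. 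With $F_3$ only in weighted $L^2(\Sigma)$ you also cannot move these derivatives onto $F_3$. So Young's inequality does not absorb them into $\int_\Sigma\varrho_3^{-2}|F_3|^2$. This is why direct Carleman estimates with nonhomogeneous Neumann data typically carry a weighted $H^{1/2,1/4}(\Sigma)$-type norm of the datum.

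Second, even for $F_3=0$ there is trouble on $\Gamma\setminus\gamma$, where $\nabla\eta=\partial_\nu\eta\,\nu$. There the same cross term contributes $2s\lambda\int\xi\,\partial_\nu\eta\,(|\nabla_\Gamma w|^2-|\partial_\nu w|^2)\,d\Sigma$, whose tangential part is $\le -2c_0s\lambda\int\xi|\nabla_\Gamma w|^2\,d\Sigma$.
\begin{itemize}
\item It has the wrong sign.
\item It is of principal order: it competes with the good term $s^3\lambda^3\xi^3|w|^2$ and dominates it at tangential frequencies beyond order $s\lambda\xi$, so taking $s,\lambda$ large does not help.
\item A tangential integration by parts only converts it into $w\,\Delta_\Gamma w$, which is worse.
\end{itemize}
The same objection applies to your cutoff argument on $\gamma$, which you leave open anyway. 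In short, the Neumann estimate with observation $\int_{\gamma\times(0,T)}|\psi|^2$ alone is a genuinely nontrivial result that your Steps 1--2 do not establish. The paper does not re-derive it; it imports it as Lemma \ref{propfi}.

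The paper then handles $F_1,F_2,F_3$ by duality. Using Lemma \ref{propfi} and Lax--Milgram, it builds a controlled solution $z$ of $z_t-\Delta z=0$, $\partial_\nu z=s^3\lambda^4\xi^3e^{-2s\alpha}\psi+v1_\gamma$, $z(0)=z(T)=0$. It obtains weighted bounds on $z$ and $v$, and via a weighted energy estimate also on $\nabla z$ and $z|_\Sigma$. Testing \eqref{adjeq} with $z$ gives \eqref{432}, where $F_1,F_2,F_3$ appear only paired with $z,\nabla z,z|_\Sigma$. Cauchy--Schwarz then yields exactly the weights $\varrho^{-2},\varrho_2^{-2},\varrho_3^{-2}$, and no boundary regularity of $\psi$ is ever needed.

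Your Step 3 could replace this duality, but not as written. If $\psi_2$ solves the plain backward problem with $\psi_2(T)=0$, the bound $\|\varrho_1^{-1}\psi_2\|\le C\|\varrho_2^{-1}F_2\|$ fails, because $e^{-2s\alpha}\to0$ as $t\to T$. A source concentrated near a time $\tau_0\to T$ makes $\|\varrho_2^{-1}F_2\|\to0$, while $\psi_2$ on $(T/4,T/2)$ does not tend to $0$. The bound does become true if you add a damping term $\mu\psi_2$ to the auxiliary equation, with $\mu=Ks^2\lambda^2\xi^2$ and $K$ large. Testing with $\mu e^{-2s\alpha}\psi_2$ gives $\int_Q e^{-2s\alpha}(\mu^2|\psi_2|^2+\mu|\nabla\psi_2|^2)\le CK\int_Q\varrho_2^{-2}|F_2|^2$. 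Then $\mu\psi_2$ is an admissible $L^2$ source for $\psi_1$. The same device, with boundary terms handled by a trace inequality, removes $F_3$ as well. That would reduce the theorem to the homogeneous estimate of Lemma \ref{propfi}: a legitimate route different from the paper's, but one that rests on Lemma \ref{propfi}, not on your Steps 1--2.
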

Let $g\in L^2(Q)$ such that
\begin{equation}\label{gcond}
    \varrho_1g\in L^2(Q).
\end{equation}
Define the spaces 
\begin{equation}\label{spac1}
    \mathcal{Y}=\{y:\varrho y\in L^2(Q)\},  \mathcal{F}=\{f:\varrho_0 f\in L^2(\Sigma)\}.
\end{equation}

\begin{theorem}\label{control1}
    Let $T>0$, $g$ with \eqref{gcond}  and $y_0\in L^2(\Omega)$ be given. For $F\in C^2$ Lipschitz with $F(0)=0$, there exists  a $f\in \mathcal{F}$ that solves the null controllability problem \eqref{null} for $y\in \mathcal{Y}$ solving \eqref{eq1}. Moreover,  
    \begin{equation}
        \|f\|_{\mathcal{F}}+\|y\|_{\mathcal{Y}}\leq C(\|g\|_{L^2(Q)}+\|y_0\|_{L^2(\Omega)})
    \end{equation}
\end{theorem}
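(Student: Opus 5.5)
The plan is to first prove the result for the linear problem \eqref{eq0} with $A=0$ and a potential $a\in L^\infty(Q)$, and then pass to the semilinear problem \eqref{eq1} by a fixed-point argument.

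\textbf{Linear case.} I will use the duality approach of Fursikov--Imanuvilov. The adjoint is \eqref{adjeq0} with $A=0$, written in the form \eqref{adjeq} with $F_1=-ap$, $F_2=0$, $F_3=0$. Theorem \ref{carleman1} then gives an estimate of $p$ in terms of $\int_Q\varrho^{-2}|ap|^2$ plus the observation on $\gamma\times(0,T)$. Since $\varrho^{-2}=s^{-3}\lambda^{-4}\xi^{-3}\varrho_0^{-2}$ is dominated by $\varrho_1^{-2}$ up to a small factor $s^{-2}\lambda^{-2}\xi^{-2}$, this term is absorbed for $s$ large depending on $\|a\|_\infty$. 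The result is a weighted observability inequality.

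On the space $P_0=\{p \text{ smooth}\}$ I then define the bilinear form
\[
  \mathcal{A}(p,q)=\int_Q\varrho^{-2}\,L^*p\,L^*q\,dxdt+\int_{\gamma\times(0,T)}\varrho_0^{-2}pq\,d\Sigma,
\]
where $L^*p=-p_t-\Delta p+ap$ and the boundary operator is imposed. By the Carleman estimate this is a scalar product; let $P$ be its completion. The linear functional
\[
  \ell(q)=\int_Q gq+\int_\Omega y_0 q(0)
\]
is bounded on $P$. For the $g$-term this uses $\varrho_1 g\in L^2$ together with the $\varrho_1^{-1}$-weighted $L^2$ control of $q$. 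For the $y_0$-term it uses an energy estimate on $[0,T/2]$ combined with the Carleman estimate on $[T/4,3T/4]$, where the weights are bounded.

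Lax--Milgram then gives $\hat p$, and I set
\[
  y=\varrho^{-2}L^*\hat p,\qquad f=-\varrho_0^{-2}\hat p\,1_\gamma .
\]
Integration by parts shows that $y$ solves \eqref{eq0} with $y(T)=0$, and the norms satisfy $\|\varrho y\|^2+\|\varrho_0 f\|^2=\mathcal{A}(\hat p,\hat p)\le C(\|\varrho_1 g\|^2+\|y_0\|^2)$. A key point is that the constant is uniform in $a$ for $\|a\|_\infty\le L$.

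\textbf{Semilinear case.} Write $F(y)=a(y)y$ with $a(y)=\int_0^1F'(\tau y)\,d\tau$. Since $F$ is Lipschitz, $|a|\le L$. For $z\in L^2(Q)$, let $\Lambda(z)$ be the set of states $y$ produced by the linear construction with potential $a(z)$. I will apply Kakutani's theorem, or Schauder's if I single out the minimal-norm control so that the map is single-valued. The steps are:
\begin{enumerate}
\item The uniform bound shows that $\Lambda$ maps a fixed ball of $\mathcal{Y}\subset L^2(Q)$ into itself.
\item Parabolic regularity on $[0,T]$ with $L^2$ boundary data gives $y$ bounded in $L^2(0,T;H^1)\cap H^1(0,T;(H^1)')$, which is compact in $L^2(Q)$ by Aubin--Lions.
\item Continuity (closed graph) follows from the continuity of $z\mapsto a(z)$ in $L^p$ and a limit passage in the weak formulation.
\end{enumerate}

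\textbf{Main obstacle.} I expect the hardest step to be the boundedness of $\ell$ and the bookkeeping of weights. In particular, the $y_0$-term needs an energy estimate for the adjoint with Fourier boundary conditions that is uniform in $a$ and $b$. I would handle the boundary term $\int_\Gamma b|p|^2$ with the trace inequality $\|p\|^2_{L^2(\Gamma)}\le\epsilon\|\nabla p\|^2+C_\epsilon\|p\|^2$ followed by Gronwall.
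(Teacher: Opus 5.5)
Your architecture matches the paper's (Propositions \ref{prop1} and \ref{nullsemi}): Lax--Milgram for $\mathcal{A}$ on the completion of smooth adjoint states, $y=\varrho^{-2}L^*\hat p$, $f=-\varrho_0^{-2}\hat p\,1_\gamma$, then $F(y)=a(y)y$ and a Schauder/Kakutani fixed point with compactness. However, the step you flagged as the main obstacle fails, and for a different reason than the one you anticipate. With the weights \eqref{weigh} one has $\alpha\ge c/(t(T-t))$ with $c=e^{2\lambda\|\eta\|_\infty}-e^{\lambda\|\eta\|_\infty}>0$. So $\alpha$ blows up at $t=0$ as well as at $t=T$, and $\varrho^{-2}=e^{-2s\alpha}$ and $\varrho_0^{-2}$ vanish to infinite order as $t\to0^+$. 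Your energy estimate for the backward adjoint gives $\|q(0)\|^2\le C\bigl(\int_{T/4}^{T/2}\|q\|^2dt+\int_0^{T/2}\|L^*q\|^2dt\bigr)$. The last term is unweighted, and it is not bounded by $\int_Q\varrho^{-2}|L^*q|^2$.

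No other argument can rescue this step. Take $L^*q=\phi(x)1_{(0,\epsilon)}(t)$ and $q(T)=0$. Then $q\equiv0$ on $[\epsilon,T]$, so $\mathcal{A}(q,q)\le Ce^{-cs/\epsilon}\epsilon\|\phi\|^2$ (for a slightly smaller $c$), while $q(0)=\epsilon\phi+o(\epsilon)$ in $L^2(\Omega)$. Choosing $\phi=y_0$ shows that $q\mapsto\int_\Omega y_0q(0)\,dx$ is unbounded on $P$ whenever $y_0\neq0$. Equivalently, $\varrho y\in L^2(Q)$ forces $\int_0^\delta e^{2sc/t}\|y(t)\|^2_{L^2(\Omega)}dt<\infty$, which is incompatible with $y(0)=y_0\neq0$. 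So with $\mathcal{Y}$ defined through \eqref{weigh} the estimate you aim for is impossible. The paper's proof has the same defect: it asserts that $\varrho$ is bounded on $[0,T/2]$, which is false for these weights.

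The standard repair (Fern\'andez-Cara, Guerrero, Imanuvilov, Puel) is to replace $t(T-t)$ in \eqref{weigh} by a $C^1$ function $\kappa(t)$ with $\kappa=T^2/4$ on $[0,T/2]$ and $\kappa=t(T-t)$ on $[T/2,T]$. The new weights are then bounded above and below near $t=0$ and blow up only at $t=T$. The Carleman inequality for these weights follows in three steps.
\begin{itemize}
\item Apply the energy estimate to $\theta q$, where $\theta$ is a cut-off equal to $1$ on $[0,T/2]$ and $0$ near $T$.
\item The source $\theta L^*q$ lives where the new weights are bounded below. The term $\theta' q$ lives in the middle of $(0,T)$, where all weights are comparable, so Theorem \ref{carleman1} bounds it.
\item The resulting inequality also controls $\|q(0)\|^2$.
\end{itemize}
With $\mathcal{Y}$, $\mathcal{F}$ and \eqref{gcond} redefined through the modified weights, your Lax--Milgram and fixed-point steps go through unchanged, and $y(T)=0$ still follows from the blow-up at $T$.

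Two smaller corrections. First, for the Fourier condition $\partial_\nu p+bp=0$ you must take $F_3=-bp$, not $F_3=0$, and absorb $\int_\Sigma\varrho_3^{-2}|bp|^2$ using $\varrho_3^{-2}=s^{-2}\lambda^{-3}\xi^{-2}\varrho_0^{-2}$. Second, the relation is $\varrho^{-2}=s^{-1}\lambda^{-2}\xi^{-1}\varrho_1^{-2}$, not with the factor $s^{-2}\lambda^{-2}\xi^{-2}$. The absorption of $ap$ still works because $\xi^{-1}\le T^2/4$.
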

In Section 2 the null controllability problem is solved for the linear case \eqref{eq0}  and then,  taking $A=0$, we apply a fixed point theorem to deduce the existence of a follower control for the semi-linear case \eqref{eq1}. Via duality method stated in the Dubovitsky-Milyoutin Theorem \ref{dv.yu}  theorem we compute   explicit solutions for  the null controllability problem..

\section{Preliminary}

Given $X$ a Banach space, $s\in \mathbb{R}$, $1\le p<\infty$ we define

$$W^{s,p}(0,T;X)=\left\{f\in L^p(0,T;X) \text{ and } \int_0^T\!\!\!\int_0^T\frac{\|f(t)-f(\tau)\|^p_X}{|t-\tau|^{sp+1}} dtd\tau <\infty\right\}.$$
We recall the following compactness result due to Simon \cite{simon1986compact}, (Corollary 9, p. 90).
 \begin{proposition}\label{5.x.}
        	Let  $X,B$ and $Y$ Banach spaces. Assume that $X \subset B$ is compact and $B\subset Y$. For $s_0,s_1$   reals, $\theta\in (0,1)$ and $1\leq r_0\leq \infty,1\leq r_1\leq \infty$, define the numbers $s_{\theta}=(1-\theta)s_0+\theta s_1$, $ \frac{1}{r_\theta}=\frac{\theta}{r_1}+\frac{1-\theta}{r_0}$ and $s_*=s_{\theta}-\frac{1}{r_\theta}$.  Let $A$ be a bounded set in  $W^{s_0,r_0}(0,T;X)\cap W^{s_1,r_1}(0,T;Y)$. If $s_*\leq 0$ then $A$ is relatively  compact in $L^p(0,T;B)$ for $p< - \frac{1}{s_*}$. 
       \end{proposition}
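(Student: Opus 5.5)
This is Simon's Corollary~9 in \cite{simon1986compact}, so in the paper it suffices to cite it. If one wants to see why it holds, I would follow Simon's strategy: combine the Kolmogorov--Riesz--Fr\'echet type characterization of relatively compact sets in $L^p(0,T;B)$ with time-regularization. The criterion (Simon, Theorem~1) says that $A\subset L^p(0,T;B)$ is relatively compact if and only if
\begin{enumerate}
\item for all $0<t_1<t_2<T$ the set $\{\int_{t_1}^{t_2} f\,dt : f\in A\}$ is relatively compact in $B$, and
\item $\|f(\cdot+h)-f\|_{L^p(0,T-h;B)}\to 0$ as $h\to 0$, uniformly for $f\in A$.
\end{enumerate}
I plan to verify these two conditions. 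The exponent restriction $p<-1/s_*$ enters only through a fractional Sobolev embedding in time.

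\textbf{Step 1: condition (1).} Since $A$ is bounded in $W^{s_0,r_0}(0,T;X)\subset L^{r_0}(0,T;X)$, the averages $\int_{t_1}^{t_2} f$ form a bounded set in $X$. By the compactness of $X\subset B$ they are relatively compact in $B$. This step is routine.

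\textbf{Step 2: interpolation.} I would interpolate the two bounds to place $A$ in an intermediate space. Using the real (Lions--Peetre) interpolation of vector-valued fractional Sobolev spaces, together with the continuous inclusions $X\subset B\subset Y$, I aim to show that $A$ is bounded in $W^{s_\theta,r_\theta}(0,T;[X,Y]_{\theta})$, with a translation estimate of the form
\[
\|f(\cdot+h)-f\|_{L^{r_\theta}(0,T-h;[X,Y]_\theta)}\le C h^{s_\theta}.
\]
The fractional embedding $W^{s_\theta,r_\theta}\subset L^{p}$, valid when $s_\theta-\frac{1}{r_\theta}>-\frac1p$ (that is, $p<-1/s_*$ when $s_*\le 0$), then upgrades this to uniform $L^p$-boundedness and to the equicontinuity of translations in $L^p(0,T;[X,Y]_\theta)$.

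\textbf{Step 3: transfer to $B$ (main obstacle).} The remaining task is to pass from $[X,Y]_\theta$, or from $Y$, to $B$ in condition (2). The natural tool is Ehrling's lemma, which follows from the compactness of $X\subset B$ and the inclusion $B\subset Y$:
\[
\|v\|_B\le \varepsilon\|v\|_X+C_\varepsilon\|v\|_Y .
\]
Applied to $v=f(\cdot+h)-f$, the $Y$-term is small by Step~2. The $X$-term, however, requires an $L^p(0,T;X)$ bound, which is not directly available when $p>r_0$. This is the step I expect to be hardest.

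My first attempt would be to handle it by time-mollification. Set $f_\delta=\rho_\delta*f$. These functions are bounded in $C([\delta,T-\delta];X)$ with uniform moduli of continuity, so the family $\{f_\delta\}$ is relatively compact in $C([\delta,T-\delta];B)$ by Arzel\`a--Ascoli and the compactness of $X\subset B$. It then remains to estimate $\|f-f_\delta\|_{L^p(B)}$ uniformly in $A$. For this I would use the Ehrling inequality in $L^{r_0}$ and interpolate in the Lebesgue exponent with the $L^p(Y)$ smallness from Step~2 via H\"older. If this interpolation does not close, the fallback is simply to invoke Simon's Corollary~9 as stated.
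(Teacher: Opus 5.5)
\textbf{There is a genuine gap: the statement as written is false, so citing Simon does not prove it.}

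The paper gives no argument of its own here; it only cites Simon's Corollary~9, which is also your fallback. But the statement is not Simon's corollary. Simon's result carries an additional interpolation hypothesis:
\[
\|v\|_B\le M\|v\|_X^{1-\theta}\|v\|_Y^{\theta}\qquad\text{for all } v\in X,
\]
with the \emph{same} $\theta$ that defines $s_\theta$ and $r_\theta$. Without it, $\theta\in(0,1)$ is a free parameter and the claim fails. Here is a counterexample. On a bounded interval take $X=H^1$, $B=H^{1-\varepsilon}$, $Y=L^2$, $r_0=r_1=2$, $0<s_0<s_1<\tfrac12$, a nonzero $\chi\in C_c^\infty(0,1)$, and
\[
f_N(t,x)=N^{1/2-s_1}\chi(Nt)\cos(N^{s_1-s_0}x).
\]
Scaling the Gagliardo seminorm shows that $(f_N)$ is bounded in $W^{s_0,2}(0,T;X)\cap W^{s_1,2}(0,T;Y)$. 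On the other hand,
\[
\|f_N\|_{L^p(0,T;B)}\approx N^{1/2-s_\varepsilon-1/p},\qquad s_\varepsilon=(1-\varepsilon)s_0+\varepsilon s_1 .
\]
Now take any $\theta>\varepsilon$ and any $p$ with $1/(\tfrac12-s_\varepsilon)<p<1/(\tfrac12-s_\theta)=-1/s_*$. For these $p$ the statement asserts relative compactness in $L^p(0,T;B)$ of a sequence that is unbounded there.

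\textbf{Why your Step~3 cannot be closed.} This is exactly the obstruction you ran into, and no mollification or Lebesgue-exponent trick will get around it.
\begin{itemize}
\item Ehrling's inequality is additive. It therefore needs $L^p(0,T;X)$ control of $\tau_hf-f$ (or of $f-f_\delta$). Your bounds give that control only up to the exponent coming from $W^{s_0,r_0}(0,T;X)$ alone, i.e.\ the endpoint $\theta=0$, not the full range $p<-1/s_*$.
\item Step~2 lands in $[X,Y]_\theta$. In the example this is $H^{1-\theta}$, which is strictly weaker than $B$, so bounds there do not control $B$.
\end{itemize}

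\textbf{With the interpolation inequality added, the argument is short.} You then need no detour through $[X,Y]_\theta$. For $0<s_0,s_1<1$, apply the inequality to $f(t)-f(\tau)$ and use H\"older in $(t,\tau)$ with exponents $r_0/((1-\theta)r_\theta)$ and $r_1/(\theta r_\theta)$. This gives
\[
[f]_{W^{s_\theta,r_\theta}(0,T;B)}\le M\,[f]_{W^{s_0,r_0}(0,T;X)}^{1-\theta}\,[f]_{W^{s_1,r_1}(0,T;Y)}^{\theta},
\]
and similarly for the $L^{r_\theta}$ norms. Hence $A$ is bounded in $W^{s_\theta,r_\theta}(0,T;B)$, with the following consequences.
\begin{itemize}
\item Translations satisfy $\|\tau_hf-f\|_{L^{r_\theta}(0,T-h;B)}\le Ch^{s_\theta}$.
\item Applying the scalar fractional embedding to $t\mapsto\|f(t)\|_B$ gives boundedness in $L^q(0,T;B)$ for $q=-1/s_*$ (any $q<\infty$ if $s_*=0$).
\item Your Step~1 together with Simon's criterion then gives relative compactness in $L^{r_\theta}(0,T;B)$.
\item H\"older interpolation between $L^{r_\theta}$ and $L^q$ extends this to every $p<-1/s_*$.
\end{itemize}
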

 
\begin{proposition}[Dubovitski-Milyoutin, \cite{alexeev2017commande}]\label{dv.yu}
      Let $H$ a Hilbert space, $F$  a real function on $H$ and $M\in C^1(H,H)$. Let $\hat{h}\in H$ be a solution to the optimization problem
      \begin{equation}
          F(\hat{h})=\inf_{h\in H} F(h), \hspace{0.5cm}M(\hat{h})=0.
      \end{equation}
      Then, there exist a $\lambda >0$ and $\zeta\in \text{ker}(M'(\hat{h}))^\bot$     not both zero such that 
      \begin{equation}\label{1.3gaw}
         \lambda F'(\hat{h})+\zeta=0.
      \end{equation}
Observe that if $M$ is a compact operator, then $M'$ is compact and moreover $$\ker M'(\hat{h})^\bot=\text{Rank} (M'(\hat{h}))^*.$$    
  \end{proposition}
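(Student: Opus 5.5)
The plan is to reduce the statement to a first-order necessary condition along curves lying in the constraint set $\{M=0\}$. I assume throughout that $F$ is Fréchet differentiable at $\hat h$. I will also need a regularity hypothesis on the constraint: that $M'(\hat h)$ has closed range, so that Lyusternik's theorem applies. If the range is not closed, the degenerate alternative of the multiplier rule must be handled separately (third paragraph). Note that with $\lambda=0$ the identity \eqref{1.3gaw} forces $\zeta=0$, so the nontrivial content of the statement is really $F'(\hat h)\in \ker(M'(\hat h))^\perp$. Once that is shown, I take $\lambda=1$ and $\zeta=-F'(\hat h)$.

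\textbf{Step 1 (tangent curves).} Write $L=M'(\hat h)$ and fix $v\in\ker L$. Decompose $H=\ker L\oplus(\ker L)^\perp$. Since $L$ restricted to $(\ker L)^\perp$ is injective with closed range, it is an isomorphism onto $R(L)$ by the open mapping theorem. Let $P$ be the orthogonal projection onto $R(L)$. Apply the implicit function theorem to
\[
\Phi(t,r)=P\,M(\hat h+tv+r), \qquad r\in(\ker L)^\perp .
\]
This gives $r(t)=o(t)$ with $P\,M(\hat h+tv+r(t))=0$. To conclude $M(\hat h+tv+r(t))=0$ one needs the component of $M$ in $R(L)^\perp$ to vanish as well. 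This is immediate if $L$ is onto, which is the Lyusternik setting. In general it is exactly the point where the degenerate alternative arises.

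\textbf{Step 2 (first-order condition).} Set $h(t)=\hat h+tv+r(t)$, which is admissible by Step 1. Minimality gives $F(h(t))\ge F(\hat h)$ for small $|t|$. Dividing by $t>0$ and by $t<0$ and letting $t\to0$ yields $\langle F'(\hat h),v\rangle=0$. Since $v\in\ker L$ was arbitrary, $F'(\hat h)\in(\ker L)^\perp$. This concludes the regular case with $\lambda=1$ and $\zeta=-F'(\hat h)\neq0$ (or $\zeta=0$ if $F'(\hat h)=0$; in either case $\lambda\neq0$).

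\textbf{Step 3 (the observation).} For any bounded $L$ one has $(\ker L)^\perp=\overline{R(L^*)}$, which follows from $\ker L=R(L^*)^\perp$. If $M$ is compact then $L=M'(\hat h)$ is compact, and hence so is $L^*$ by Schauder's theorem. The identity $\ker M'(\hat h)^\perp=\operatorname{Rank}(M'(\hat h))^*$ therefore holds with the range understood as its closure, or exactly when that range is closed, e.g.\ of finite rank.

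\textbf{Main obstacle.} I expect the main difficulty to be Step 1, namely constructing genuinely admissible curves when $L$ is not surjective. If Step 1 fails, my first attempt would be to separate: pick $z\in R(L)^\perp$ and read off a multiplier from it. The resulting condition, however, is expressed via the adjoint $L^*z$ and lives in $\ker(L)^\perp$ only trivially. So in the paper's application I would instead verify directly that the linearized constraint map is onto (or has closed range), which is where the Carleman inequality of Theorem \ref{carleman1} enters as an observability and closed-range estimate.
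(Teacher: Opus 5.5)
\textbf{Verdict.} Your Steps 1--2 are the standard Lyusternik/implicit-function argument, and they are correct \emph{when $M'(\hat h)$ is onto}. The paper gives no proof of its own; it only cites the optimal-control text, where the multiplier rule is obtained this way.

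\textbf{The gap.} You claim that closed range suffices and that the remaining ``degenerate alternative'' can be dealt with separately. As you note at the end of Step 1, the implicit function theorem only yields $P\,M(h(t))=0$. If $R(L)\neq H$, nothing controls the component of $M(h(t))$ in $R(L)^\perp$, and your third paragraph never comes back to this. It cannot be repaired, because in that regime the statement as written (minimization over $\{M=0\}$) is false. Take $H=\mathbb{R}$, $M(x)=x^2$, $F(x)=x$. The only feasible point is $\hat h=0$, and $M'(0)=0$ has closed range. Since $(\ker M'(0))^\perp=\{0\}$, we must have $\zeta=0$. Then $\lambda F'(0)+\zeta=\lambda$, so the identity forces $\lambda=0$, contradicting $\lambda>0$ (and ``not both zero'').

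\textbf{Where the formulation goes wrong.} The closed-range version in the cited reference reads $\lambda F'(\hat h)+M'(\hat h)^*y^*=0$ with $(\lambda,y^*)\neq 0$ and $\lambda\ge 0$. Its degenerate case is $\lambda=0$ with $0\neq y^*\in R(L)^\perp$, which gives $\zeta=L^*y^*=0$. That is exactly why this case disappears once the multiplier is recorded as $\zeta$ with ``$(\lambda,\zeta)$ not both zero''.

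\textbf{The fix.} Add the hypothesis that $M'(\hat h)$ is surjective, or more generally that every $v\in\ker M'(\hat h)$ is tangent to $\{M=0\}$ at $\hat h$ (i.e.\ $\hat h+tv+o(t)\in\{M=0\}$ for small $t>0$). With it, your Steps 1--2 are a complete proof. You should also drop the suggestion that closed range would be enough in the application.

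\textbf{Step 3.} This step is correct and in fact corrects the paper. A compact operator has closed range only if it has finite rank, so compactness works against $(\ker L)^\perp=R(L^*)$ rather than for it. Under surjectivity the identity does hold exactly: $R(L)=H$ is closed, so the closed range theorem makes $R(L^*)$ closed. This is what would justify writing the multiplier as $M'(y,f)^*\psi$ later in the paper.
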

The proof of the following results can be seen in \cite{LionsMagenes2} page 82. 
   \begin{lemma}\label{regubd}
Let $h\in L^2(Q) $  and $g\in L^2(\Sigma)$. Then there exists a solution $u\in H^{3/2,1/4}(Q)$ to the problem
\begin{equation}
    \begin{array}[c]{lll}
         u_t-\Delta u +au=h&  \hbox{in} &\Omega,\\
         \partial_\nu u+bu=g& \hbox{on}& \Gamma,\\
         u(0)=u_0 &\hbox{on}&\Omega.
         
    \end{array}
\end{equation}
Moreover, it fulfills the inequality
\begin{equation}
    \|u\|_{H^{3/2,1/4}(Q)}\leq C\big(\|h\|_{L^2(\Sigma)}+\|g\|_{L^2(Q)}).
\end{equation}
\end{lemma}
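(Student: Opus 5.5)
The statement is the classical Lions–Magenes regularity result. Before proving it I would read it in its evidently intended form. The estimate should be $\|u\|_{H^{3/2,1/4}(Q)}\le C(\|h\|_{L^2(Q)}+\|g\|_{L^2(\Sigma)}+\|u_0\|)$; the printed version has the two norms interchanged and omits the initial datum. I would first prove the statement for $u_0=0$ and $a=0$, $b=0$, and then add the lower-order terms and the initial datum by perturbation.

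\textbf{Weak solution and energy estimate.} I would start by building a weak solution $u\in L^2(0,T;H^1(\Omega))\cap H^1(0,T;H^1(\Omega)')$. This uses a Faedo–Galerkin scheme for the bilinear form $\int_\Omega\nabla u\cdot\nabla v+\int_\Omega a\,uv+\int_\Gamma b\,uv$. Testing with $u$ gives $\frac{d}{dt}\|u\|^2+\|\nabla u\|^2\le C(\|u\|^2+\|u\|^2_{L^2(\Gamma)})+C(\|h\|^2+\|g\|^2_{L^2(\Gamma)})$. The trace term is absorbed by the interpolation inequality $\|u\|^2_{L^2(\Gamma)}\le\varepsilon\|\nabla u\|^2+C_\varepsilon\|u\|^2$. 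Gronwall then gives the $W(0,T)$ bound in terms of $\|h\|_{L^2(Q)}+\|g\|_{L^2(\Sigma)}+\|u_0\|_{L^2(\Omega)}$, and uniqueness follows from the same estimate.

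\textbf{Fractional regularity, $a=b=0$, $u_0=0$.} I would split $u=u_1+u_2$. Here $u_1$ solves the problem with source $h$ and homogeneous Neumann data; it lies in $H^{2,1}(Q)$ with norm bounded by $\|h\|_{L^2(Q)}$, by maximal $L^2$ regularity of the Neumann Laplacian. The piece $u_2$ solves the problem with zero source and boundary flux $g$. For the map $g\mapsto u_2$ I would use two endpoint estimates:
\begin{itemize}
\item $H^{1/2,1/4}(\Sigma)\to H^{2,1}(Q)$, from the compatibility-free Neumann theory with zero initial data;
\item $(H^{1/2,1/4}(\Sigma))'\to H^{1,1/2}(Q)$, by transposition, i.e.\ duality against the adjoint backward problem in $H^{2,1}$.
\end{itemize}
Interpolating with $\theta=1/2$ gives $L^2(\Sigma)\to H^{3/2,3/4}(Q)$, and $H^{3/2,3/4}(Q)\hookrightarrow H^{3/2,1/4}(Q)$. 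A nonzero $u_0\in L^2(\Omega)$ contributes the semigroup term $e^{t\Delta_N}u_0$, which is only $H^{1,1/2}$ near $t=0$. I would therefore either take $u_0$ in the appropriate trace space $H^{1/2}(\Omega)$ or understand the estimate away from $t=0$. I would state this caveat explicitly, since the lemma as printed is not literally true for arbitrary $u_0\in L^2$.

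\textbf{Perturbation by $a$ and $b$ (main obstacle).} I would treat $au$ as part of the source and $-bu$ as part of the boundary flux, and set up the fixed-point map $v\mapsto u$ on $H^{3/2,3/4}(Q)$. The source term is harmless: $a\in L^\infty$ gives $\|av\|_{L^2(Q)}\le C\|v\|_{L^2(Q)}$. The difficulty is the boundary term, because $b$ is merely $L^\infty$. Multiplication by $b$ destroys all trace regularity, so one can only use $\|bv\|_{L^2(\Sigma)}\le\|b\|_\infty\|v\|_{L^2(\Sigma)}$. This is nevertheless enough, since the trace of $H^{3/2,3/4}(Q)$ lies in $H^{1,1/2}(\Sigma)$ and that space embeds compactly in $L^2(\Sigma)$.

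To close the argument I would work on short time intervals $(0,\tau)$, where the $L^2(\Sigma)$ norm of a function vanishing at $t=0$ is small relative to its $H^{3/2,3/4}$ norm; for instance $\|v\|_{L^2(0,\tau;L^2(\Gamma))}\le C\tau^{1/4}\|v\|_{H^{3/2,3/4}}$. This makes the map a contraction on $(0,\tau)$, and finitely many steps cover $(0,T)$. Alternatively, the Ehrling-type bound $\|v\|_{L^2(\Sigma)}\le\varepsilon\|v\|_{H^{3/2,3/4}}+C_\varepsilon\|v\|_{L^2(Q)}$ can be combined with the energy estimate from the first step to absorb the boundary term directly.
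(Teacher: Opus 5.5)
Your argument is correct. The paper gives no proof of its own here; it only refers to Lions--Magenes, p.~82. Your sketch is essentially the argument behind that citation: $H^{2,1}$ maximal regularity for the Neumann problem, a transposition endpoint, and interpolation to $L^2(\Sigma)\to H^{3/2,3/4}(Q)\subset H^{3/2,1/4}(Q)$. It also supplies what the bare citation does not. Lions--Magenes work with smooth coefficients, so the $L^\infty$ potentials $a$ and $b$ need a perturbation step like yours. Your corrected reading of the statement is also right: the norms are interchanged, $\|u_0\|$ is missing, and one needs $u_0\in H^{1/2}(\Omega)$ rather than $L^2(\Omega)$. That last caveat is not cosmetic, since the paper later applies the lemma with $y_0\in L^2(\Omega)$ to get boundedness of $\Lambda(L^2(Q))$ in $H^{3/2,1/4}(Q)$.

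Two small adjustments. First, duality against adjoint solutions in $H^{2,1}(Q)$ gives the endpoint $(H^{3/2,3/4}(\Sigma))'\to L^2(Q)$, not $(H^{1/2,1/4}(\Sigma))'\to H^{1,1/2}(Q)$. Interpolating it with the $H^{2,1}$ endpoint at $\theta=1/4$ yields the same $L^2(\Sigma)\to H^{3/2,3/4}(Q)$. The endpoint you state is true, but proving it by duality requires the adjoint problem in the energy class $H^{1,1/2}(Q)$, where the critical time index $1/2$ brings the usual $H^{1/2}_{00}$ technicalities.

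Second, the perturbation step is easier than you make it. The weak solution from your first step already lies in $L^2(0,T;H^1(\Omega))$. Hence $au\in L^2(Q)$ and, by the trace theorem, $bu\in L^2(\Sigma)$, with norms controlled by the energy estimate. Move these terms into the data, apply the $a=b=0$ result once, and use uniqueness of weak solutions. This gives both the regularity and the bound, without a short-time contraction or an Ehrling inequality.
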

\subsection{Proof of theorem \ref{carleman1}.}
To prove theorem \ref{carleman1} it is necessary to have the next Carleman inequality (\cite{boundnullchorfi},
Lemma 3.2 p. 9 ) for Fourier boundary conditions. 
\begin{lemma}\label{propfi}
 Let $p_0\in L^2(\Omega)$ and let $ p$ such that $p_t+\Delta p\in L^2(Q)$, $\partial_\nu p=0$ and $p(T)=p_0.$  Then there exist positive constants $\lambda_1(\Omega,\gamma)$  and $s_1=C_1(\Omega,\gamma)(T+T^{8/3})$  such that for any $s>s_1$ and $\lambda> \lambda_1$ the next inequality holds,
\begin{equation}\label{carl1}
    \begin{array}{lll}
& \displaystyle  s^{-1}\int_Qe^{-2s\alpha} \xi^{-1}(|p_t|^2+|\Delta p|^2  )dxdt+s\lambda^2\int_Qe^{-2s\alpha}\xi|\nabla p|^2dxdt\\
& +   \displaystyle s\lambda^2\int_Qe^{-2s\alpha}\xi|p|^2dxdt
 \displaystyle +s^3\lambda^3\int_{\Sigma}\!\!\!e^{-2s\alpha}\xi^3|p|^2d\Sigma\\
& \displaystyle  \leq C\left(\int_Qe^{-2s\alpha}|p_t+\Delta p|^2+ s^3\lambda^4\int_{\gamma \times (0,T)}\!\!\!\!\!\!\!\!\! \xi^3e^{-2s\alpha}|p|^2d\Sigma\right).\\
    \end{array}
\end{equation}

\end{lemma}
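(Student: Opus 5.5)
The plan is the classical Fursikov--Imanuvilov conjugation argument, with the weights \eqref{weigh} built from the function $\eta$ of Lemma \ref{lem1}, and with the Neumann condition $\partial_\nu p=0$ turning every boundary term into a lower-order trace of the conjugated variable. Put $w=e^{-s\alpha}p$ and $P p:=p_t+\Delta p$. Since $\nabla\alpha=-\lambda\xi\nabla\eta$, we have
\begin{equation}
e^{-s\alpha}P(e^{s\alpha}w)=M_1w+M_2w, \qquad
M_1w=\Delta w+s^2\lambda^2\xi^2|\nabla\eta|^2w+s\alpha_t w, \qquad
M_2w=w_t-2s\lambda\xi\nabla\eta\cdot\nabla w-2s\lambda^2\xi|\nabla\eta|^2w .
\end{equation}
I would move the remaining zero-order term $-s\lambda\xi\Delta\eta\, w$ into the right-hand side $f_s:=e^{-s\alpha}Pp+s\lambda\xi\Delta\eta\,w$. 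The starting identity is
\begin{equation}
\|M_1w\|^2+\|M_2w\|^2+2(M_1w,M_2w)_{L^2(Q)}=\|f_s\|^2 .
\end{equation}
The boundary condition becomes $\partial_\nu w=s\lambda\xi\,\partial_\nu\eta\, w$ on $\Sigma$. Also $w$ and its derivatives vanish at $t=0,T$, because $e^{-s\alpha}$ decays faster than any power of $t(T-t)$. So all time-boundary terms disappear.

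Next I would expand $(M_1w,M_2w)$ term by term, as in Fernández-Cara--Guerrero, and integrate by parts in $x$ and $t$.

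\emph{Interior terms.} Using (ii) of Lemma \ref{lem1}, $|\nabla\eta|\ge c_0$ everywhere, so there is no interior observation region. The dominant contributions are $s^3\lambda^4\int_Q\xi^3|\nabla\eta|^4|w|^2$ and $s\lambda^2\int_Q\xi|\nabla\eta\cdot\nabla w|^2$ (plus $s\lambda^2\int\xi|\nabla\eta|^2|\nabla w|^2$). The remaining interior terms are absorbed by taking $\lambda\ge\lambda_1$ and then $s\ge C(T+T^2)$. Here one uses $|\alpha_t|\le CT\xi^2$ and $|\alpha_{tt}|\le CT^2\xi^3$, which is what produces the threshold $s_1=C(T+T^{8/3})$.

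\emph{Boundary terms.} These are the genuinely new part. After substituting $\partial_\nu w=s\lambda\xi\partial_\nu\eta\,w$, the worst one has the form
\begin{equation}
-\,c\,s^3\lambda^3\int_\Sigma\xi^3(\partial_\nu\eta)|\nabla\eta|^2|w|^2\,d\Sigma .
\end{equation}
On $\Gamma\setminus\gamma$ it has a good sign by (iii), which gives the positive term $s^3\lambda^3\int_\Sigma\xi^3|w|^2$ appearing on the left of \eqref{carl1}. On $\gamma$ it is simply bounded by $s^3\lambda^4\int_{\gamma\times(0,T)}\xi^3|w|^2$ (the extra $\lambda$ absorbs $\|\eta\|_{C^1}$ factors).

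The terms containing $|\nabla_\Gamma w|^2$ and $\partial_\nu\eta\,|\partial_\nu w|^2$ are where (iv) is needed. On $\Gamma\setminus\gamma$ we have $\nabla\eta=(\partial_\nu\eta)\nu$, so the tangential cross terms $\nabla_\Gamma\eta\cdot\nabla_\Gamma w$ vanish. The term $s\lambda\int\xi\partial_\nu\eta|\nabla_\Gamma w|^2$ then has the right sign. The mixed trace terms $s^2\lambda^2\xi^2 w\,\partial_\nu w$ reduce, via the boundary condition, to $s^3\lambda^3\xi^3|w|^2$ terms already handled. On $\gamma$, the tangential-gradient terms are the delicate point. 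I would remove them with a boundary integration by parts on $\Gamma$, $\int\xi\,\nabla_\Gamma\eta\cdot\nabla_\Gamma w\,w=-\tfrac12\int\operatorname{div}_\Gamma(\xi\nabla_\Gamma\eta)|w|^2$. If a residual $s\lambda\int_{\gamma\times(0,T)}\xi|\nabla_\Gamma w|^2$ survives, I would localize with a cutoff supported in a slightly larger $\gamma'\subset\gamma$ and interpolate the $H^{1/2}$-trace against the interior gradient term.

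Finally I return to $p$. For the gradient, $e^{-s\alpha}\nabla p=\nabla w-s\lambda\xi w\nabla\eta$, which gives the $s\lambda^2\xi|\nabla p|^2$ term. The $s^{-1}\xi^{-1}(|p_t|^2+|\Delta p|^2)$ terms follow from $\|M_1w\|^2+\|M_2w\|^2$: we have $s^{-1/2}\xi^{-1/2}\Delta w$ from $M_1w$ and $s^{-1/2}\xi^{-1/2}w_t$ from $M_2w$, each up to terms already controlled. Undoing the conjugation then gives \eqref{carl1}, with $C$ depending on $(\Omega,\gamma)$ through $\eta$.

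The main obstacle is the boundary analysis on $\gamma$: making sure every tangential-gradient trace term there is either cancelled, has a favorable sign, or can be absorbed. Only the zero-order observation $s^3\lambda^4\int_{\gamma\times(0,T)}\xi^3|p|^2$ may remain on the right-hand side.
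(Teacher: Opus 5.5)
Your route is the one the paper follows in its appendix: conjugation by $e^{-s\alpha}$, the $M_1/M_2$ splitting, $\partial_\nu w=s\lambda\xi\,\partial_\nu\eta\,w$ on $\Sigma$, the sign of $\partial_\nu\eta$ on $\Gamma\setminus\gamma$ for the zero-order traces, and the observation on $\gamma$. However, there is a genuine gap at the tangential boundary term, and it starts with a sign error. Integrating $2(\Delta w,-2s\lambda\xi\nabla\eta\cdot\nabla w)$ by parts gives interior terms such as $4s\lambda^2\int_Q\xi|\nabla\eta\cdot\nabla w|^2$, together with the boundary contribution
\[
2s\lambda\int_\Sigma\xi\,\partial_\nu\eta\,\bigl(|\nabla_\Gamma w|^2-|\partial_\nu w|^2\bigr)\,d\Sigma-4s\lambda\int_\Sigma\xi\,\partial_\nu w\,\nabla_\Gamma\eta\cdot\nabla_\Gamma w\,d\Sigma .
\]
So the normal and tangential parts enter with opposite signs (Rellich). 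The condition $\partial_\nu\eta\le-c_0$ on $\Gamma\setminus\gamma$ makes the normal part favorable; after inserting the boundary condition, this is one source of the $s^3\lambda^3\xi^3|w|^2$ trace term. But the same condition gives $2s\lambda\int\xi\,\partial_\nu\eta\,|\nabla_\Gamma w|^2\le-2c_0s\lambda\int\xi|\nabla_\Gamma w|^2$ on $(\Gamma\setminus\gamma)\times(0,T)$, which has the wrong sign. Property (iv) only removes the cross term. Your claim that this term ``then has the right sign'' is therefore false. The real obstruction sits on $\Gamma\setminus\gamma$, and more generally wherever $\partial_\nu\eta<0$. On $\gamma$ the tangential term is actually favorable wherever $\partial_\nu\eta>0$.

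This term cannot be absorbed either. The Neumann condition gives no information on $\nabla_\Gamma w$, and $s\lambda\xi|\nabla_\Gamma w|^2$ is of principal order on $\Sigma$: it dominates $s^3\lambda^3\xi^3|w|^2$ once the tangential frequency exceeds $s\lambda\xi$. Trace and interpolation give $\int_\Gamma|\nabla_\Gamma w|^2\le C\|w\|_{H^1(\Omega)}\|w\|_{H^2(\Omega)}$. The left-hand side carries weight $s\lambda^2\xi$ on first derivatives and $s^{-1}\xi^{-1}$ on second derivatives, so this controls at most $\lambda\int_\Sigma|\nabla_\Gamma w|^2$, a factor $s\xi$ too weak. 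The same scaling defeats your cutoff-and-interpolation remedy on $\gamma'$. Moreover, there is no equation for the trace on $\Gamma$ on which to run a local energy estimate, as one does for interior observation. What is missing is an argument that exploits the Neumann condition at high tangential frequencies, or some other device that produces a favorable $|\nabla_\Gamma w|^2$ term. At those frequencies the conjugated operator is elliptic and the $H^1$-trace can be estimated, as in microlocal proofs of Neumann Carleman estimates. You will not find this in the paper either: its appendix handles $s\lambda\int_\Sigma\partial_\nu\eta\,\xi|\nabla\psi|^2$ by a tautological inequality, and later drops the residual $Cs\lambda\int_{\gamma\times(0,T)}\xi|\nabla\psi|^2$ without justification. (A minor, harmless slip: your $f_s$ should also contain $-s\lambda^2\xi|\nabla\eta|^2w$.)
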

\textbf{Proof of Theorem \ref{carleman1} :} Consider  $\psi$ the solution to the  adjoint equation \eqref{adjeq}.  For the moment (this will be proved in the next lemma), assume  that there exists $h$ such that  $z\in L^2(0,T;H^1(\Omega))\cap H^1(0,T,L^2(\Omega))$ is the solution  to the equation
\begin{equation}\label{zeq}
    \begin{array}[c]{lll}
        z_t-\Delta z  =0 & \text{in} &\, Q, \\
       \partial_\nu z=h & \text{on}& \,\Sigma, \\
       z(0)=0, z(T)=0 & \text{in} &\, \Omega.\\
    \end{array}
\end{equation}
Multiply \eqref{adjeq} by $z$,  consider $\psi$ as a solution by transposition,  to get
\begin{equation}
    \int_\Sigma \psi h\,dxdt=\int_QF_1z-F_2\cdot\nabla z  \,dxdt+\int_\Sigma F_3 z\,d\Sigma.
\end{equation}

In particular for $h=s^3\lambda^4\xi^3e^{-2s\alpha}\psi+v1_{\gamma}$, the above identity becomes
\begin{equation}\label{432}
    \int_\Sigma s^3\lambda^4\xi^3e^{-2s\alpha}|\psi|^2\,dxdt=\int_Q(F_1z-F_2\nabla z)  \,dxdt+\int_\Sigma F_3 z\,d\Sigma-\int_{\gamma\times(0,T)}\!\!\!\!\!\!\!\!\!\!\psi v\,d\Sigma.
\end{equation}
The strategy to obtain the desired Carleman inequality is to bound each integral of the right hand side of \eqref{432} via Young's inequality with appropriately chosen weights  deduced from Carleman inequality \eqref{carl2}. This leads to a sum of weighted $L^2$ bounds for $z,\nabla z$ and the  trace of $z$. From this sum, the integral  from the left hand side of \eqref{432}  will be controlled  by a $L^2(\gamma\times (0,T))$ integral of $\psi$.  To reach this objective, we require the solution to \eqref{zeq} stated in the next lemma. 

\begin{lemma}
There exists a $v\in L^2(\gamma\times(0,T))$ such that equation \eqref{zeq} has an associated  solution $z\in L^2(Q)$.
\end{lemma}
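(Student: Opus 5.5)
We realize $z$ as the optimal state of a weighted boundary null-control problem, in the spirit of Fursikov--Imanuvilov and of \cite{fernandez2006null}. The data are fixed: $\psi$ solves \eqref{adjeq}, and the boundary datum is $h=s^3\lambda^4\xi^3e^{-2s\alpha}\psi+v1_\gamma$. The term $s^3\lambda^4\xi^3e^{-2s\alpha}\psi$ plays the role of a source on $\Sigma$. Since $e^{-2s\alpha}$ vanishes to infinite order at $t=0,T$, this source lies in $L^2(\Sigma)$ with room to spare, once we know that $\varrho_0^{-1}\psi\in L^2(\Sigma)$ (this is the quantity we are ultimately estimating). We then look for $v$ supported on $\gamma\times(0,T)$ such that the solution of $z_t-\Delta z=0$, $\partial_\nu z=h$, $z(0)=0$, also satisfies $z(T)=0$.

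The construction is variational. On the set of pairs $(z,v)$ with $v\in L^2(\gamma\times(0,T))$ and $z$ the state associated with $h$ by Lemma \ref{regubd} (with $a=b=0$), minimize
\[
J(z,v)=\tfrac12\int_Q\varrho_1^{2}|z|^2\,dxdt+\tfrac12\int_{\gamma\times(0,T)}\varrho_0^{2}|v|^2\,d\Sigma
\]
subject to $z(T)=0$. The weights $\varrho_1^2,\varrho_0^2$ blow up at $t=T$, so finiteness of $J$ forces decay of $z$. Lemma \ref{regubd} gives $z\in H^{3/2,1/4}(Q)\subset L^2(0,T;H^1(\Omega))$, and the equation then puts $z_t$ in the dual space. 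Smoothing by the heat equation, or a short energy argument on $(z,\rho z)$ with a cut-off in time, will give the $H^1(0,T;L^2)$ regularity claimed in \eqref{zeq}.

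The steps are as follows.

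\textbf{(1) Nonemptiness of the admissible set.} We show the admissible set is nonempty and that $J$ is coercive. By duality with the adjoint heat equation $-\varphi_t-\Delta\varphi=\varrho_1^{2}z$-type, $\partial_\nu\varphi=0$, this reduces to an observability inequality. That inequality is exactly Lemma \ref{propfi} applied to $\varphi$, which bounds $\int_\Sigma s^3\lambda^3\xi^3e^{-2s\alpha}|\varphi|^2$ and the interior weighted norms by $s^3\lambda^4\int_{\gamma\times(0,T)}\xi^3e^{-2s\alpha}|\varphi|^2$.

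\textbf{(2) Existence of a minimizer.} The functional $J$ is strictly convex and lower semicontinuous, and the constraint set is weakly closed: the map $v\mapsto z$ is affine and continuous by Lemma \ref{regubd}. Hence there is a unique minimizer $(\hat z,\hat v)$. Equivalently, in the Lax--Milgram form used in \cite{fernandez2006null}, we define on a completion $P$ of smooth $\varphi$ the bilinear form
\[
a(\varphi,\tilde\varphi)=\int_Q\varrho_1^{-2}L^*\varphi\,L^*\tilde\varphi+\int_{\gamma\times(0,T)}\varrho_0^{-2}\varphi\tilde\varphi ,
\]
with $L^*\varphi=-\varphi_t-\Delta\varphi$. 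We solve $a(\hat\varphi,\cdot)=\langle\ell,\cdot\rangle$, where $\ell(\varphi)=\int_\Sigma s^3\lambda^4\xi^3e^{-2s\alpha}\psi\,\varphi$, and set $z=\varrho_1^{-2}L^*\hat\varphi$ and $v=-\varrho_0^{-2}\hat\varphi|_\gamma$. The Dubovitski--Milyoutin Proposition \ref{dv.yu} yields the same optimality system.

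\textbf{(3) Regularity and the endpoint conditions.} We check from the weak formulation that $z$ solves \eqref{zeq} with $z(0)=z(T)=0$, and that $z\in L^2(Q)$.

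The main obstacle is step (1): proving that $\ell$ is continuous on $P$. This requires bounding $\int_\Sigma s^3\lambda^4\xi^3e^{-2s\alpha}|\varphi|^2$ by $a(\varphi,\varphi)$. Lemma \ref{propfi} supplies this, but only with the constant $s^3\lambda^3$ on the left against $s^3\lambda^4$ on the right. Matching the powers of $\lambda$ between $\ell$, $\varrho_0$ and $\varrho_1$ therefore needs care. Failing exact matching, we would first try to absorb one power of $\lambda$ into the weight of $\psi$, taking $\ell$ with $s^3\lambda^3$, and track the discrepancy as a harmless constant.
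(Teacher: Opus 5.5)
Your proposal is correct and is essentially the paper's argument. The paper also completes $\{p\in C^2(\overline{Q}):\partial_\nu p=0\}$ under a weighted norm of $L_0^*p$ plus $s^3\lambda^4\int_{\gamma\times(0,T)}\xi^3e^{-2s\alpha}|p|^2\,d\Sigma$, gets non-degeneracy and continuity of $\ell$ from Lemma \ref{propfi}, applies Lax--Milgram, and reads off $\hat z$ and $\hat v=-s^3\lambda^4\xi^3e^{-2s\alpha}\hat p1_\gamma$. The only difference is that its interior weight is $e^{-2s\alpha}$ where yours is $\varrho_1^{-2}$; yours gives a weaker weighted bound on $z$ than \eqref{eqf}, which does not matter for existence.

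The $\lambda^3$ versus $\lambda^4$ mismatch you flag is real, and the paper's bound on $\|\ell\|_{\mathcal{B}'}$ is missing a factor $C\lambda^{1/2}$. For fixed $\lambda$ this is just a constant, so $\ell$ is continuous and your second fallback is the right resolution. Your aside claiming $H^1(0,T;L^2(\Omega))$ regularity is not justified for $L^2(\Sigma)$ Neumann data, but the statement does not need it.
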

\begin{lemmaproof}
 Define the optimization problem for the variables $\lambda,s$ chosen as in Lemma \ref{propfi}. 
\begin{equation}\label{zeq1}
    \begin{array}{lll}
    \displaystyle &\underset{v \in L^2(Q)}{\min} \displaystyle\int_{Q}e^{2s\alpha}|z|^2dxdt +s^{-3}\lambda^{-4}\int_{\gamma\times (0,T)}\!\!\!\!\!\xi^{-3}e^{2s\alpha}|v|^2d\Sigma\\
        &z_t-\Delta z  =0 & \text{in} \, Q, \\
       &\partial_\nu z=s^3\lambda^4\xi^3e^{-2s\alpha}\psi+v1_{\gamma} & \text{on} \,\Sigma, \\
       &z(0)=0 & \text{in} \, \Omega.\\
    \end{array}
\end{equation}

Define the vector space $\mathcal{B}_0=\{p\in C^2(\bar{Q}): \partial_\nu p|_{\Sigma}=0 \}$. In $\mathcal{B}_0$ define the bilinear form  
\begin{equation}
    m(0,p,q)=\int_Qe^{-2s\alpha}L_0^*(p)L^*_0(q)\,dxdt+s^3\lambda^4\int_{\gamma\times (0,T)} \!\!\!\xi^4e^{-2s\alpha}pq\,d\Sigma
\end{equation}
and the linear functional
\begin{equation}\label{funl}
    l(p)=s^3\lambda^4\int_{Q}\xi^3e^{-2s\alpha}\psi p\,dxdt.
\end{equation}
By Carleman inequality \eqref{carl1},  $m(0,p,p)=0$ implies $p=0$, so it defines a norm $\|p\|_{\mathcal{B}_0}=m(0;p,p)$ on $\mathcal{B}_0$. By completion, the space  $\mathcal{B}_0$ becomes a Banach space  $(\mathcal{B},\|\cdot \|_{\mathcal{B}})$ and  the functional \eqref{funl} is continuous in this space. By standard optimization arguments \cite{calsavara2022new},\cite{villa2026stackelberg} and \cite{fernandez1997null}, there exists explicit solution for optimization problem  \eqref{zeq1} given by $\hat{p}\in \mathcal{B}$ such that
\begin{equation}\label{inden12}
    m(0,\hat{p},q)=l(q)
\end{equation}

and the solutions  for \eqref{zeq1} are of the form
\begin{equation}
    \hat{z}=e^{-2s\alpha}L^*_0(\hat{p}
    ), \hspace{2cm} \hat{v}=-s^3\lambda^4\xi^3e^{-2s\alpha}\hat{p}1_{\gamma}.
\end{equation}

Denote by $\|\ell\|_{\mathcal{B}}$ the dual norm of the operator. From \eqref{inden12} it is possible to bound
\begin{equation}
    \|\hat{p}\|^2_{\mathcal{B}}\leq \|\ell\|_{\mathcal{B'}}\|\hat{p}\|_{\mathcal{B}}.
\end{equation}
Where, since the choice of $\lambda$ and $s$ , the dual norm is estimated by
\begin{equation}
     \|\ell\|_{\mathcal{B'}}\leq s^{3/2}\lambda^2\left(\int_\Sigma\xi^{3}e^{-2s\alpha}|\psi|^2\,d\Sigma\right)^{1/2}.
\end{equation}

Then we get
\begin{equation}\label{eqf}
    \int_{Q}e^{2s\alpha}|\hat{z}|^2\,dxdt+s^{-3}\lambda^{-4}\int_{\gamma\times (0,T)}\!\!\!\!\!\!\!\!\xi^{-3}e^{2s\alpha}|\hat{v}|^2d\Sigma \leq Cs^3\lambda^4\int_\Sigma\xi^3e^{2s\alpha}|\psi|^2\,dxdt.
\end{equation}
 This lemma allows one to rewrite optimization problem \eqref{zeq1} as a fourth order equation given by
 \begin{equation}
 \begin{array}{lll}
        &L_0(e^{-2s\alpha}L^*_0(\hat{p}
    ))=0 & \text{in} \, Q, \\
       &\partial_\nu(e^{-2s\alpha}L^*_0(\hat{p}
    ))=s^3\lambda^4\xi^3e^{-2s\alpha}\psi-s^3\lambda^4\xi^3e^{2s\alpha}\hat{p}1_{\gamma} & \text{on} \,\Sigma, \\
       &(e^{2s\alpha}L^*_0(\hat{p}
    ))(0)= (e^{2s\alpha}L^*_0(\hat{p}
    ))(T)=0& \text{in} \, \Omega.\\
    \end{array}
\end{equation}
\end{lemmaproof}

\textbf{Step 1:} All the following computations are aimed at deriving the desired weighted $L^2$ estimates for $\hat{z},\nabla \hat{z}$ and the trace of $\hat{z}$. The philosophy behind the structure of the desired weights follows the classical approach introduced  in \cite{fernandez2006null}  and \cite{emanuilov1995controllability}.  Consequently,  throughout the forthcoming analysis we expect to get weights of the form $e^{2s\alpha}\xi^{-m}$ with $m\geq0$. Multiply  equation   \eqref{zeq} by $s^{-2}\lambda^{-2}\xi^{-2}e^{2s\alpha}\hat{z}$, integrate by parts to get
\begin{equation}\label{32.9}
\begin{array}[c]{lll}
\displaystyle \frac{1}{2}s^{-2}\lambda^{-2}\int_{Q}\xi^{-2}e^{2s\alpha}\frac{d}{dt}|\hat{z}|^2\,dxdt+s^{-2}\lambda^{-2}\int_Q \xi^{-2}e^{2s\alpha}|\nabla \hat{z}|^2\,dxdt\\
+\displaystyle s^{-2}\lambda^{-2}\int_Q \nabla(\xi^{-2}e^{2s\alpha})\hat{z}\cdot \nabla \hat{z}\,dxdt = s\lambda ^2\int_\Sigma \xi\psi \hat{z}\,d\Sigma+s^{-2}\lambda^{-2}\int_{\gamma\times (0, T)}\!\!\!\!\!\!\!\!\!\!\!\xi^{-2} e^{2s\alpha}v\hat{z}\,d\Sigma
\end{array}
\end{equation}
Now we compute the gradient in the third integral of the left hand side of \eqref{32.9} to get
\begin{equation}\label{32.10}
\begin{array}[c]{lll}
\displaystyle \frac{1}{2}s^{-2}\lambda^{-2}\int_{Q}\xi^{-2}e^{2s\alpha}\frac{d}{dt}|z|^2\,dxdt+s^{-2}\lambda^{-2}\int_Q \xi^{-2}e^{2s\alpha}|\nabla z|^2\,dxdt+\\
\displaystyle s^{-2}\lambda^{-2}\int_Q(-2\lambda \xi^{-2}e^{2s\alpha}  - 2s\xi^{-1}e^{2s\alpha})\nabla \eta \cdot z\nabla z\,dxdt = s\lambda ^2\!\!\!\int_\Sigma \xi\psi z\,d\Sigma\\
\displaystyle +s^{-2}\lambda^{-2}\int_{\gamma\times (0, T)}\!\!\!\!\!\!\!\!\!\!\!\xi^{-2}e^{2s\alpha}v\hat{z}\,d\Sigma.
\end{array}
\end{equation}
Integrate by parts the first integral and move it to the right hand side,  take $\hat{z}\nabla \hat{z}=\frac{1}{2}\nabla |\hat{z}|^2$. Also $2\xi^{-1}e^{2s\alpha}\nabla \eta \cdot (\hat{z}\nabla \hat{z})=\text{div}( |\hat{z}|^2\xi^{-1}e^{2s\alpha}\nabla \eta)-\nabla\cdot(\xi^{-1}e^{2s\alpha}\nabla \eta)\cdot |\hat{z}|^2$. Finally, apply the Stokes theorem and move the resulting boundary integral to the left hand side to get
\begin{equation}\label{32d}
\begin{array}[c]{lll}
\displaystyle s^{-2}\lambda^{-2}\!\!\!\int_Q \xi^{-2}e^{2s\alpha}|\nabla \hat{z}|^2\,dxdt-s^{-1}\lambda^{-2}\!\!\!\!\int_{\Sigma}\xi^{-1}e^{2s\alpha}\partial_{\eta}\eta|\hat{z}|^2\,d\Sigma = \frac{1}{2}s^{-2}\lambda^{-2}\!\!\!\int_{Q}\frac{d}{dt}(\xi^{-2}e^{2s\alpha})|\hat{z}|^2\,dxdt\\
-s^{-1}\lambda^{-1}\displaystyle\int_Q \nabla \cdot(\xi^{-1}e^{2s\alpha}\nabla \eta)\cdot |\hat{z}|^2\,dxdt + 2s^{-2}\lambda^{-1}\int_Q  \xi^{-2}e^{2s\alpha}\nabla \eta \cdot \hat{z}\nabla \hat{z}\,dxdt\\
+\displaystyle s\lambda ^2\!\!\!\int_\Sigma \xi\psi \hat{z}\,d\Sigma+s^{-2}\lambda^{-2}\int_{\gamma\times (0, T)}\!\!\!\!\!\!\xi^{-2}e^{2s\alpha}v\hat{z}\,d\Sigma.
 \end{array}
\end{equation}

The next lemma can be found in \cite{fernandez2006null}, but for completeness, its proof is given.
\begin{lemma}
    For each $m$ real number,  the following inequalities hold
    \begin{equation}\label{12.4}
      |\nabla(e^{2s\alpha}\xi^m)|<C(\Omega,\gamma)s\lambda e^{2s\alpha}\xi^{m+1}
\end{equation}
and
\begin{equation}\label{567}
   \left| \frac{d}{dt}(\xi^{k}e^{2s\alpha})\right|<C(\Omega,\gamma)Ts\xi^{k+1}e^{2s\alpha}e^{2\lambda\|\eta\|_{\infty}}.
\end{equation}

\end{lemma}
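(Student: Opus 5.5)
Both estimates come from differentiating the explicit weights in \eqref{weigh} directly. I will use the elementary identities
\[
\nabla\xi=\lambda\xi\nabla\eta,\qquad \nabla\alpha=-\lambda\xi\nabla\eta,
\]
\[
\partial_t\Big(\tfrac{1}{t(T-t)}\Big)=\frac{2t-T}{t^2(T-t)^2},\qquad \xi\ge \frac{4}{T^2}\ \text{on } \overline Q .
\]
The last bound holds because $\eta\ge 0$ (Lemma \ref{lem1}) and $t(T-t)\le T^2/4$. All constants may depend on $\|\eta\|_{C^1(\overline\Omega)}$, hence on $(\Omega,\gamma)$ through Lemma \ref{lem1}.

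\emph{Gradient estimate \eqref{12.4}.} By the product rule,
\[
\nabla(e^{2s\alpha}\xi^m)=e^{2s\alpha}\xi^m\big(-2s\lambda\xi\nabla\eta+m\lambda\nabla\eta\big).
\]
The first term is already of the form $s\lambda e^{2s\alpha}\xi^{m+1}$ times $2\|\nabla\eta\|_\infty$. For the second term I write $\xi^m=\xi^{m+1}\xi^{-1}$ and use $\xi^{-1}\le T^2/4$. This gives $|m|\lambda\xi^m\le \tfrac{|m|T^2}{4}\lambda\xi^{m+1}$, which is at most $s\lambda\xi^{m+1}$ once $s\ge |m|T^2/4$. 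That restriction is compatible with the choice $s_1=C_1(T+T^{8/3})$ of Lemma \ref{propfi} after enlarging $C_1$. Adding the two terms yields \eqref{12.4} with $C=(2+1)\|\nabla\eta\|_\infty$.

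\emph{Time estimate \eqref{567}.} Again by the product rule,
\[
\partial_t(\xi^ke^{2s\alpha})=e^{2s\alpha}\xi^k\Big(k\,\tfrac{\xi_t}{\xi}+2s\alpha_t\Big),
\]
where
\[
\tfrac{\xi_t}{\xi}=\tfrac{2t-T}{t(T-t)},\qquad \alpha_t=\big(e^{2\lambda\|\eta\|_\infty}-e^{\lambda\eta}\big)\tfrac{2t-T}{t^2(T-t)^2}.
\]
Since $|2t-T|\le T$ and $e^{\lambda\eta}\ge 1$, the first term is bounded by $|k|T\,e^{-\lambda\eta}\xi\le |k|T\xi$. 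The second is bounded by
\[
2sT\,e^{2\lambda\|\eta\|_\infty}\,\frac{1}{t(T-t)}\cdot\frac{1}{t(T-t)}=2sT\,e^{2\lambda\|\eta\|_\infty}e^{-\lambda\eta}\,\xi\cdot\frac{1}{t(T-t)}.
\]

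\emph{Main obstacle.} After this bookkeeping, one factor $(t(T-t))^{-1}$ is left over in the $\alpha_t$-term. To match the exponent $\xi^{k+1}$ in \eqref{567}, this factor must be absorbed into the constant. The first thing I would try is to keep the weight $e^{2\lambda\|\eta\|_\infty}$ in the form $e^{2\lambda\|\eta\|_\infty}e^{-\lambda\eta}$ and see whether the lemma is only ever applied after integrating in $t$ against a quantity that vanishes like $t(T-t)$ at the endpoints, since $\hat z(0)=\hat z(T)=0$ in \eqref{zeq}.

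A pointwise bound uniform in $t$ with exactly $\xi^{k+1}$ cannot hold near $t=0$ and $t=T$, because $|\alpha_t|\sim (t(T-t))^{-2}$ there. So I would state and prove \eqref{567} on $\Omega\times[\delta,T-\delta]$, where $(t(T-t))^{-1}\le C_\delta$, and check at each later use that the endpoints are handled by the vanishing of $\hat z$. The robust fallback is the bound with $\xi^{k+2}$ in place of $\xi^{k+1}$, which the computation above gives on all of $\overline Q$.
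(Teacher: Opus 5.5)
Your proof of \eqref{12.4} is the paper's argument: the product rule, then $\xi^{-1}\le T^2/4$ and $s$ large (the paper takes $s>T^2/(4C)$) to absorb the lower-order $m\lambda\nabla\eta$ term. That part is fine.

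For \eqref{567}, your diagnosis is correct. What it exposes is an error in the statement, not a gap in your argument. The paper's proof only says ``proceed in the same way'', and that cannot produce $\xi^{k+1}$. Near $t=0$ one has $2s\alpha_t\approx -2s\,(e^{2\lambda\|\eta\|_\infty}-e^{\lambda\eta})/(Tt^2)$, and $e^{2\lambda\|\eta\|_\infty}-e^{\lambda\eta}\ge e^{2\lambda\|\eta\|_\infty}-e^{\lambda\|\eta\|_\infty}>0$. The $k$-term is only $O(t^{-1})$, so no cancellation can occur. Hence the ratio of the left side of \eqref{567} to its right side grows like $t^{-1}$, and the inequality fails for every fixed $s,\lambda$; the same happens at $t=T$.

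What does hold on all of $Q$ is your fallback:
\[
|\partial_t(\xi^k e^{2s\alpha})|\le 3Ts\,e^{2\lambda\|\eta\|_\infty}\xi^{k+2}e^{2s\alpha}\quad\text{for } s\ge |k|T^2/4 .
\]
This is also consistent with the paper's own appendix bound $|\alpha_t|+|\xi_t|\le CT\xi^2$. Prove that version and discard the localization plan. A bound on $\Omega\times[\delta,T-\delta]$ with a constant $C_\delta$ is useless, because the whole point of the weights is uniformity up to $t=0$ and $t=T$. Nor does $\hat z(0)=\hat z(T)=0$ help: membership in $H^1(0,T;L^2(\Omega))$ only gives $\|\hat z(t)\|_{L^2(\Omega)}=O(t^{1/2})$, which cannot offset a pointwise factor $(t(T-t))^{-1}$.

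The corrected estimate is enough at the one place \eqref{567} is invoked, namely \eqref{inesd1} with $k=-2$. There it gives $C\,Ts^{-1}\lambda^{-2}e^{2\lambda\|\eta\|_\infty}\int_Q e^{2s\alpha}|\hat z|^2\,dxdt$. This term is of the same type already on the right of \eqref{5672} and \eqref{12v}, and it is controlled through \eqref{eqf}, provided $s\ge CTe^{2\lambda\|\eta\|_\infty}$. That condition is essentially the one the paper imposes on $s$ after \eqref{12v}.
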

\begin{proof}
Let 
\begin{equation}
    \begin{array}[c]{lll}
    |\nabla(e^{2s\alpha}\xi^m)|=|e^{2s\alpha}\lambda \xi^m\nabla \eta_0(2s\xi+m)|<Ce^{2s\alpha}\lambda \xi^m(s\xi+1).
    \end{array}
\end{equation}
Then for $s>\frac{T^2}{4C}$ it holds $C(\Omega,\gamma)s\xi>1$ and then
\begin{equation}\label{45a}
    s^{-1}\xi^{-1}<C(\Omega,\gamma).
\end{equation}
Invoking this in the above inequality, we get
\begin{equation}\label{12.42}
      |\nabla(e^{2s\alpha}\xi^m)|<C(\Omega,\gamma)s\lambda e^{2s\alpha}\xi^{m+1}.
\end{equation}
For the second inequality, proceed in the same way.
\end{proof}

\textbf{Step 2:} Proceed now to estimate each of the integrals on the right hand side of \eqref{32d}. The idea is to separate each integral into weighted $L^2$integrals depending only on $|\hat{z}|$ or $|\nabla \hat{z}|^2$. Go ahead with the first one using the condition \eqref{567} to get
\begin{equation}\label{inesd1}
    \frac{1}{2}s^{-2}\lambda^{-2}\!\!\!\int_{Q}\frac{d}{dt}(\xi^{-2}e^{2s\alpha})|\hat{z}|^2\,dxdt\leq C(\Omega,\gamma)s^{-1}\lambda^{-2}e^{2\lambda\|\eta\|_{\infty}}\int_Q\xi^{-1}e^{2s\alpha}|\hat{z}|^2\,dxdt.
\end{equation}
 Proceed now to evaluate the second integral on the right hand side of \eqref{32d}. Estimate the gradient $\nabla(\xi^{-5/2}e^{2s\alpha}\nabla \eta)$ in two stages: first, by lemma \ref{lem1} we find that the Laplacian $\|-\Delta \eta\|_\infty<C_0$ is bounded on $\bar{\Omega}$ and  second,  use \eqref{12.4} to get
 \begin{equation}\label{inesd2}
     -s^{-1}\lambda^{-1}\displaystyle\int_Q \nabla \cdot(\xi^{-1}e^{2s\alpha}\nabla \eta)\cdot |\hat{z}|^2\,dxdt\leq C(\Omega,\gamma)\|\nabla \eta\|_{\infty}\lambda^{-1}\int_Q e^{2s\alpha}|\hat{z}|^2\,dxdt+s^{-1}\lambda^{-1}C_0\int_Q\xi^{-1}e^{2s\alpha}|\hat{z}|^2\,dxdt.
 \end{equation}
 
In the third integral, proceed through the Young inequality to obtain
\begin{equation}\label{inesd3}
\begin{array}[c]{llll}
    \displaystyle 2s^{-2}\lambda^{-1}\int_Q  \xi^{-2}e^{2s\alpha}\nabla \eta \cdot \hat{z}\nabla \hat{z}\,dxdt\leq  2s^{-2}\lambda^{-1}\|\nabla \eta\|_{\infty}\int_Q  \xi^{-2}e^{2s\alpha} \hat{z}|\nabla \hat{z}|\,dxdt\\
    \displaystyle \leq \frac{1}{2} s^{-2}\lambda^{-1}\int_Q\xi^{-2}e^{2s\alpha}|\nabla \hat{z}|^2\,dxdt+2s^{-2}\lambda^{-1}\int_Q\xi^{-2}e^{2s\alpha}| \hat{z}|^2\,dxdt.
    \end{array}
\end{equation}
Finally, estimate the integrals involving $\psi$ and $v$ 
\begin{equation}\label{inesd4}
\begin{array}[c]{lll}
   \displaystyle  s\lambda ^2\!\!\!\int_\Sigma \xi\psi \hat{z}\,d\Sigma &\displaystyle-s^{-2}\lambda^{-2}\int_{\gamma\times (0, T)}\!\!\!\!\!\!\xi^{-2}e^{2s\alpha}v\hat{z}\,d\Sigma \leq  s^{-4}\lambda^{-4}\int_{\gamma\times(0,T)}\xi^{-4}e^{2s\alpha} |v|^2\,d\Sigma \\
   &\displaystyle+\lambda^{-1}\int_{\gamma\times (0,T)}e^{2s\alpha}|\hat{z}|^2\,d\Sigma +s^2\lambda^4\int_\Sigma\xi^2e^{-2s\alpha}|\psi|^2\,dxdt + \int_\Sigma e^{2s\alpha}|\hat{z}|^2\,dxdt.
     \end{array}
\end{equation} 

Combining \eqref{inesd1},\eqref{inesd2},\eqref{inesd3}, \eqref{inesd4}  in \eqref{32d}, one gets
\begin{equation}\label{neq45}
\begin{array}[c]{lll}
   \displaystyle s^{-2}\lambda^{-2}\!\!\!\int_Q \xi^{-2}e^{2s\alpha}|\nabla \hat{z}|^2\,dxdt&\displaystyle -s^{-1}\lambda^{-2}\!\!\!\!\int_{\Sigma}\xi^{-1}e^{2s\alpha}\partial_{\eta}\eta|\hat{z}|^2\,d\Sigma\\
    &\displaystyle\leq C(\Omega,\gamma)s^{-1}\lambda^{-2}e^{2\lambda\|\eta\|_{\infty}}\int_Q\xi^{-1}e^{2s\alpha}|\hat{z}|^2\,dxdt \\ 
    &+\displaystyle C(\Omega,\gamma)\|\nabla \eta\|_{\infty}\lambda^{-1}\int_Q e^{2s\alpha}|\hat{z}|^2\,dxdt+s^{-1}\lambda^{-1}C_0\int_Q\xi^{-1}e^{2s\alpha}|\hat{z}|^2\,dxdt\\
    & +\displaystyle  \frac{1}{2}s^{-2}\lambda^{-2}\int_Q\xi^{-2}e^{2s\alpha}|\nabla \hat{z}|^2\,dxdt+s^{-2}\lambda^{-1}\int_Q\xi^{-2}e^{2s\alpha}| \hat{z}|^2\,dxdt\\
    &+\displaystyle s^{-4}\lambda^{-4}\int_{\gamma\times(0,T)}\xi^{-4}e^{2s\alpha} |v|^2\,d\Sigma \\
   &\displaystyle+\lambda^{-1}\int_{\gamma\times (0,T)}e^{2s\alpha}|\hat{z}|^2\,d\Sigma ++s^2\lambda^4\int_\Sigma\xi^2e^{-2s\alpha}|\psi|^2\,dxdt + \int_\Sigma e^{2s\alpha}|\hat{z}|^2\,dxdt.
    \end{array}
\end{equation}
It remains to bound the weight functions by a constant. We know that $\lambda^{-1}\leq \lambda_*^{-1}$, and due to property \eqref{45a}, it is possible to absorb  the powers of $\lambda^{-1}$ in the constant $C(\Omega,\gamma)$. Under this argument, \eqref{neq45} can be written as
\begin{equation}\label{5672}
\begin{array}[c]{lll}
\displaystyle s^{-2}\lambda^{-2}\int_Q \xi^{-2}e^{2s\alpha}|\nabla \hat{z}|^2\,dxdt-s^{-1}\lambda^{-1}\int_{\Sigma}\xi^{-1}e^{2s\alpha}\partial_\nu\eta|\hat{z}|^2\,d\Sigma \leq C(\Omega,\gamma)\left(s^{-1}\lambda^{-1}\int_Q e^{2s\alpha}|\hat{z}|^2\,dxdt\right. \\
\displaystyle  +\lambda^{-1}\int_Qe^{2s\alpha}|\hat{z}|^2\,dxdt+2\lambda^{-1}\!\!\!\!\int_\Sigma e^{2s\alpha}| \hat{z}|^2\,dxdt+\left.s^3\lambda^4\!\!\!\int_\Sigma\xi^3 e^{2s\alpha}|\psi|^2\,dxdt+s^{-3}\lambda^{-4}\!\!\!\!\int_{\gamma\times(0,T)}\!\!\!\!\!\!\!\!\!\!\xi^{-3}e^{2s\alpha}|v|^2\,dxdt\right)\\
+\displaystyle \frac{1}{2} s^{-2}\lambda^{-2}\int_Q\xi^{-2}e^{2s\alpha}|\nabla \hat{z}|^2\,dxdt.
 \end{array}
\end{equation}
 Now we need to control the second integral of the above inequality by a boundary integral free from the normal derivative $\partial_\nu\eta$. Split the boundary $\Gamma =\Gamma/\gamma \cup \gamma$ and write
\begin{equation}\label{234}
-s^{-1}\lambda^{-1}\!\!\!\!\int_{\Sigma}\xi^{-1}e^{2s\alpha}\partial_{\eta}\eta|\hat{z}|^2\,d \Sigma= -s^{-1}\lambda^{-1}\!\!\int_{\gamma \times (0,T)}\!\!\!\!\!\!\!\!\!\!\!\!\xi^{-1}e^{2s\alpha}\partial_{\eta}\eta|\hat{z}|^2\,d\Sigma -s^{-1}\lambda^{-1}\!\!\int_{\Gamma/\gamma\times(0,T)}\!\!\!\!\!\!\!\!\!\!\!\!\!\!\!\!\xi^{-1}e^{2s\alpha}\partial_{\eta}\eta|\hat{z}|^2\,d \Sigma.
\end{equation}
From \ref{lem1} we have that $-\partial_\nu\eta \geq c_0$ on $\Gamma/\gamma \times(0,T)$ then the second integral on the right hand side of the above equality we can bound
\begin{equation}
  c_0  s^{-1}\lambda^{-1}\!\!\int_{\Gamma/\gamma\times(0,T)}\!\!\!\!\!\!\!\!\!\!\!\!\!\!\!\!\xi^{-1}e^{2s\alpha}|\hat{z}|^2\,d\Sigma \leq   -s^{-1}\lambda^{-1}\!\!\int_{\Gamma/\gamma\times(0,T)}\!\!\!\!\!\!\!\!\!\!\!\!\!\!\!\!\xi^{-1}e^{2s\alpha}\partial_{\eta}\eta|\hat{z}|^2\,d\Sigma.
\end{equation}
It remains to bound the integral on $\gamma\times (0,T)$: apply the trace theorem in the last integral to get 
\begin{equation}
    s^{-1}\lambda^{-1}\int_{\gamma\times(0,T)} \xi^{-1}e^{2s\alpha}|\hat{z}|^2\,d\Sigma\leq    s^{-1}\lambda^{-2}\int_0^T\|\xi^{-1}e^{s\alpha}\hat{z}\|^2_{H^1(\Omega)}dt.
\end{equation}
Estimate the right hand side by inequality \eqref{12.4} and get
\begin{equation} \label{bdry12}
     s^{-1}\lambda^{-1}\int_{\gamma\times(0,T)} \xi^{-1}e^{2s\alpha}|\hat{z}|^2\,d\Sigma \leq   s^{-1}\lambda^{-2}\int_Q\xi^{-1}e^{2s\alpha}|\hat{z}|^2\,dxdt+s^{-1}\int_Qe^{2s\alpha}|\nabla \hat{z}|^2\,d\Sigma.
\end{equation}
In \eqref{5672}, apply the decomposition \eqref{234} and translate the integral on $\gamma$ to the right hand side and use the above inequality to get 
\begin{equation}\label{56721}
\begin{array}[c]{lll}
\displaystyle s^{-2}\lambda^{-2}\!\!\!\int_Q \xi^{-2}e^{2s\alpha}|\nabla \hat{z}|^2\,dxdt+s^{-1}\lambda^{-1}\!\!\!\!\int_{\Sigma}\xi^{-1}e^{2s\alpha}|\hat{z}|^2\,d\Sigma \leq C(\Omega,\gamma)\left(s^{-1}\lambda^{-1}\int_Q e^{2s\alpha}|\hat{z}|^2\,dxdt\right. \\
\displaystyle  +s^{-1}\lambda^{-1}\int_Q\xi^{-1}e^{2s\alpha}|\hat{z}|^2\,dxdt+s^2\lambda^4\int_Q\xi^{2}e^{2s\alpha}| \hat{z}|^2\,dxdt\\
\displaystyle+\left.s\lambda^2\int_\Sigma\xi e^{2s\alpha}|\psi|^2\,dxdt+s^{-2}\lambda^{-4}\int_{\gamma\times(0,T)}\!\!\!\!\!\!\!\!\!\!\xi^{-4}e^{2s\alpha}|v|^2\,dxdt\right).
 \end{array}
\end{equation}

Finally, use inequality \eqref{bdry12} in \eqref{5672} to get \footnote{The constant $C(\Omega,\gamma)$ is not the same line by line; it absorbs different terms along the argument, but to avoid overwhelming notation, we use the same symbol through.}
\begin{equation}\label{12v}
\begin{array}[c]{lll}
    \displaystyle s^{-2}\lambda^{-2}\!\!\!\int_Q \xi^{-2}e^{2s\alpha}|\nabla \hat{z}|^2\,dxdt+s^{-1}\lambda^{-1}\!\!\!\!\int_{\Sigma}\xi^{-1}e^{2s\alpha}|\hat{z}|^2\,d\Sigma \leq\\
    \displaystyle C(\Omega,\gamma)\left(\int_Qe^{2s\alpha}|\hat{z}|^2\,dxdt+s^3\lambda^4\int_\Sigma\xi^3e^{-2s\alpha}|\psi|^2\,dxdt+s^{-3}\lambda^{-4}\int_{\gamma\times(0,T)}\!\!\!\!\!\!\!\!\!\!\xi^{-3}e^{2s\alpha}|v|^2\,dxdt \right).
    \end{array}
\end{equation}
where $\lambda >C(\Omega,\gamma)$ and $s\geq C(\Omega,\gamma)(e^{2\|\eta\|_{\infty}}T+T^2)$. Add the left-hand side of the inequality \eqref{eqf} to both sides \eqref{12v} to get
\begin{equation}\label{13v}
\begin{array}[c]{lll}
    \displaystyle s^{-2}\lambda^{-2}\!\!\!\int_Q \xi^{-2}e^{2s\alpha}|\nabla \hat{z}|^2\,dxdt &\displaystyle+s^{-1}\lambda^{-1}\!\!\!\!\int_{\Sigma}\xi^{-1}e^{2s\alpha}|\hat{z}|^2\,d\Sigma\\
  &+ \displaystyle   \int_{Q}e^{2s\alpha}|\hat{z}|^2\,dxdt+s^{3}\lambda^{4}\int_{\gamma\times (0,T)}\!\!\!\!\!\!\!\!\xi^{3}e^{2s\alpha}|v|^2d\Sigma \leq\\
&\displaystyle\leq C(\Omega,\gamma)s^3\lambda^4\int_\Sigma\xi^3e^{-2s\alpha}|\psi|^2\,dxdt.
    \end{array}
\end{equation}
Now return to the identity \eqref{432}. Apply Young inequality (considering the weights above) to get
\begin{equation}\label{bint1}
    \begin{array}[c]{lll}
     & \displaystyle  s^3\lambda^4\int_\Sigma\xi^3e^{-2s\alpha}|\psi|^2\,dxdt \leq \int_Qe^{-2s\alpha}|F_1|^2\,dxdt  +s^2\lambda^2\!\!\!\int_Q \xi^2e^{-2s\alpha}|F_2|^2\,dxdt\\
         &\displaystyle  +s\lambda\int_{\Sigma}\xi e^{-2s\alpha}|F_3|^2\,d\Sigma +s^3\lambda^4\int_{\gamma \times (0,T)}\xi^3e^{-2s\alpha}|\psi|^2\,dxdt. 
    \end{array}
\end{equation}
\textbf{Step 4:} The second part of the proof relies on establishing estimates for the gradient $|\nabla \psi|^2$  on $Q$. To this end, it is necessary to work in the weak  sense of the solution of \eqref{432}. Let $F_1\in L^2Q)$, $F_2\in L^2(Q)^n$ and $F_3\in L^2(\Sigma)$. A function $\psi\in L^2(0,T;H^1(\Omega))\cap C^0(0,T,L^2(\Omega))$ is a weak solution to \eqref{432}
if  
\begin{equation}
    -\langle \psi_t,u\rangle+\int_\Omega\nabla \psi \cdot \nabla u\,dxdt=\int_\Omega (F_1 u-F_2\cdot\nabla u)\,dx+\int_{\partial\Omega}\!\!\!\!F_3u\,d\Sigma, \hspace{0.4cm} \forall u\in H^1(\Omega).
\end{equation}
Take the function 
\begin{equation}
        u=s\lambda^2\xi e^{-2s\alpha}\psi.
\end{equation}
Observe  that 

\begin{equation}\label{hng}
    \begin{array}[c]{lll}
\displaystyle s\lambda^2\int_Q\xi e^{-2s\alpha}|\nabla \psi|^2\,dxdt&=\displaystyle s\lambda^2\int_Q(\xi e^{-2s\alpha})_t|\psi|^2\,dxdt +s\lambda^2\int_Q\psi \nabla(\xi e^{-2s\alpha} )\cdot\nabla \psi\,dxdt \\
&\displaystyle+s\lambda^2\int_Q \xi e^{-2s\alpha}F_1\psi-s\lambda^2\int_QF_2\nabla(\xi e^{-2s\alpha}\psi)\,dxdt\\
&\displaystyle+s\lambda^2\int_\Sigma\xi e^{-2s\alpha}F_3\psi\,d\Sigma.
    \end{array}
\end{equation}
Then
\begin{equation}\label{y1}
    s\lambda^2\int_Q \xi e^{-2s\alpha}F_1\psi\,dxdt\leq C(\Omega,\gamma)\left(\int_Qe^{-2s\alpha}|F_1|^2\,dxdt+s\lambda^2\int_Q\xi^2e^{-2s\alpha}|\psi|^2\,dxdt\right)
\end{equation}

Then by property \eqref{12.4} and Young inequality to split on $F_2,\psi$ and $\nabla \psi$ to get
\begin{equation}\label{y2}
\begin{array}[c]{lll}
  \displaystyle -s\lambda^2  \int_QF_2\nabla(\xi^2e^{-2s\alpha}\psi)\,dxdt&\leq \displaystyle C(\Omega,\gamma)\left(s^2\lambda^2\int_Q\xi^2 e^{-2s\alpha}|F_2|^2\,dxdt \right.\\
  &\displaystyle \left. +s^3\lambda^4\int_Q\xi^3e^{-2s\alpha}|\psi|^2\,dxdt\right)+\frac{1}{2}s\lambda^2\int_Q\xi e^{-2s\alpha}|\nabla \psi|^2\,dxdt.
  \end{array}
\end{equation}
Now
\begin{equation}\label{y3}
    s\lambda^2\int_\Sigma\xi e^{-2s\alpha}F_3\psi\,d\Sigma \leq C(\Omega,\gamma)\left(s\lambda\int_{\Sigma}\xi e^{-2s\alpha}|F_3|^2\,d\Sigma  +\int_{\Sigma}|\psi|^2\,d\Sigma  \right)
\end{equation}
Use \eqref{y1}, \eqref{y2} and \eqref{y3} in \eqref{hng},  to get
\begin{equation}
    \begin{array}[c]{lll}
    \displaystyle   s\lambda^2\int_Q\xi e^{-2s\alpha}|\nabla \psi|^2\,dxdt   \displaystyle & \displaystyle\leq C(\Omega,\gamma) \Big(\int_Qe^{-2s\alpha}|F_1|^2\,dxdt+s\lambda^2\int_Q\xi e^{-2s\alpha}|\psi|^2\,dxdt\\
          &+\displaystyle s\lambda^2\int_Q\xi e^{-2s\alpha}|F_2|^2\,dxdt+s^3\lambda^4\int_Q\xi^3e^{-2s\alpha}|\psi|^2\,dxdt\\
         &+\displaystyle s\lambda\int_\Sigma \xi e^{-2s\alpha}|F_3|^2\,d\Sigma +s\lambda^3\int_\Sigma \xi e^{-2s\alpha}|\psi|^2\,d\Sigma \Big).
         \end{array}
\end{equation}
Using inequality \eqref{bint1} and the fact that $s\lambda^3\xi\leq s^3\lambda^3\xi^3$, we get
\begin{equation}\label{bint2}
    \begin{array}[c]{lll}
     & \displaystyle   s\lambda^2\int_Q\xi e^{-2s\alpha}|\nabla \psi|^2\,dxdt\leq \int_Qe^{-2s\alpha}|F_1|^2\,dxdt  +s^2\lambda^2\!\!\!\int_Q \xi^2e^{-2s\alpha}|F_2|^2\,dxdt\\
         &\displaystyle  +s\lambda\int_{\Sigma}\xi e^{-2s\alpha}|F_3|^2\,d\Sigma +s^3\lambda^4\int_{\gamma \times (0,T)}\xi^3e^{-2s\alpha}|\psi|^2\,dxdt 
    \end{array}
\end{equation}
Now, remains to compute the $L^2(Q)$ estimates for $|\psi|^2$.  Since the weight function is strictly positive and smooth in $\overline{\Omega}$, a weighted Poincaré inequality holds and it is possible to get
\begin{equation}\label{bint3}
    s\lambda^2\int_Q\xi e^{-2s\alpha}|\psi|^2\,dxdt\leq C s\lambda^2\int_Q\xi e^{-2s\alpha}|\nabla \psi|^2\,dxdt.
\end{equation}
Combining \eqref{bint1}, \eqref{bint2} and \eqref{bint3} it is possible to get \eqref{carl1}
. 

\section{Null controllability problem}
We solve the linear case for \eqref{eq1}. Consider
\begin{equation}\label{eq2}
    \begin{array}[c]{lll}
        y_t-\Delta y +A\cdot \nabla y+ay =g & \text{in} \, Q, \\
       \partial_\nu y+by=  f1_\gamma & \text{on} \,\Sigma, \\
       y(0)=y_0  & \text{in} \, \Omega.\\
    \end{array}
\end{equation}
The key idea to solve null control problem lies in optimization arguments.  Recall that the weights involved in the Carleman inequality blow-up when $t\rightarrow T$, so we can ensure that if the control $f$ makes that the solution $y$ of \eqref{eq2} satisfies $y\in \mathcal{Y}$,  then the solution will reach the condition \eqref{null}.  

\begin{definition}
Define the functional $S:\mathcal{F}\rightarrow \mathbb{R}$ by 
    \begin{equation}
    S(f)=\frac{1}{2}\int_{Q}\varrho^2|y|^2\,dxdt+\frac{1}{2}\int_{\gamma\times (0,T)}\varrho_0^2|f|^2\,d\Sigma.
\end{equation}
\end{definition}

subject to \eqref{eq1}, where $\varrho_0$ and $\varrho$ are  functions such that blow up when $t\rightarrow T$. The existence of a minimizer of this functional and the blowing-up condition on the weights will steer that $y$ solves  \eqref{null} . This approach was used in \cite{calsavara2022new} and \cite{villa2026stackelberg}.  
Observe that $\mathcal{Y}\subset L^2(Q)$ and $\mathcal{F}\subset L^2(\Sigma)$. The control $f$ fulfills \eqref{null} as a  consequence of the optimization problem 
\begin{equation}\label{opnull}
   S(f)=\inf_{\tilde{f}\in \mathcal{F}}S(\tilde{f})  
\end{equation}

For the rest of the text, denote by $L_{A,a}=\partial_t-\Delta -\text{div}(A\cdot)+a$ and $L_0$ the trivial case. Let $\nu\in \Gamma(\Sigma)$ the normal vector field on the boundary of $\Omega$ and $A\in W^{1,\infty}(Q,\mathbb{R}^n)$ . Define the set
\begin{equation}\label{defP}
    \mathcal{P}_0=\{p\in C^2(\bar{Q}):\partial_\nu p+p(b+A\cdot \nu)=0\}
\end{equation}
In $\mathcal{P}_0$, define the bilinear form 
\begin{equation}\label{bilin}
    B(A,a;p,q)=\int_{Q}\varrho^{-2}L_{A,a}^*(p)L_{A,a}^*(q)+\int_{\gamma\times(0,T)}\varrho_0^{-2}pq\,d\Sigma
\end{equation}
Define the semi-norm $\|\cdot\|_{\mathcal{P}_0}=B(0,0;p,p)^{1/2}$. By the Carleman inequality  stated in Theorem\ref{carleman1} the semi-norm is not degenerate; hence it defines a norm.

\begin{definition}
    Define the normed space $(\mathcal{P},\|\cdot\|_{\mathcal{P}})$ as the completion of the semi-normed space $(\mathcal{P}_0,\|\cdot\|_{\mathcal{P}_0})$. 

\end{definition}
The completion ensures that the Carleman inequality in Theorem \ref{carleman1} holds on $\mathcal{P}$ and that the boundary constraint in \eqref{defP} holds on $\mathcal{P}$.
\begin{proposition}\label{prop1}
    Let $T>0$, $y_0\in L^2(\Omega)$ and $g\in L^2(Q)$ with \eqref{gcond}. Then there exists a follower control $f\in \mathcal{F}$ such that the solution $y\in \mathcal{Y}$ satisfies \eqref{null}. Moreover, we have
    \begin{equation}\label{nullsyst}
    \begin{array}[c]{lll}
        y_t-\Delta y +A\cdot \nabla y+ay =g & \text{in} \, Q, \\
       \partial_\nu y+by= \varrho_0^{-2}p1_{\gamma} & \text{on} \,\Sigma, \\
       y(0)=y_0  & \text{in} \, \Omega.\\
    \end{array}
\end{equation}
with
\begin{equation}
    y=\varrho^{-2}L_{A,a}^*(p).
\end{equation}

where $p\in \mathcal{P}$ solves 
\begin{equation}
   B(A,a;p,q)=\langle g,p\rangle_{L^2(Q)}+\langle y_0,p(0)\rangle_{L^2(\Omega)},\,\,\,\,\,\,\forall q\in \mathcal{P}.
\end{equation}
Also
    \begin{equation}\label{ioneq12}
        \|y\|_{\mathcal{Y}} +\|f\|_{\mathcal{F}}< C(\|g\|+\|y_0\|_{L^2(\Omega)})
    \end{equation}

\end{proposition}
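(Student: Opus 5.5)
The plan is the Fursikov–Imanuvilov duality argument, adapted to the boundary observation on $\gamma$.

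\textbf{Step 1: coercivity of $B$.} I first check that $B(A,a;\cdot,\cdot)$ is coercive on $\mathcal{P}$ with respect to $\|\cdot\|_{\mathcal{P}}$. Write
\begin{equation}
L_{A,a}^*(p)=-p_t-\Delta p-\text{div}(Ap)+ap .
\end{equation}
Then $p\in\mathcal{P}_0$ solves \eqref{adjeq} with
\begin{equation}
F_2=Ap,\qquad F_1=-ap+L^*_{A,a}(p),\qquad F_3=0,
\end{equation}
since the boundary condition in \eqref{defP} is exactly the one in \eqref{adjeq}. Theorem \ref{carleman1} then bounds the weighted norms of $p$ and $\nabla p$. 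The lower-order terms $\int\varrho^{-2}|ap|^2$ and $\int\varrho_2^{-2}|Ap|^2$ are dominated by $\int\varrho_1^{-2}|p|^2$, because $\varrho_1^{-2}=s\lambda^2\xi\varrho^{-2}\ge s\lambda^2\xi\,\varrho_2^{-2}/(s^2\lambda^2\xi^2)$ and $s\xi\ge C^{-1}$ by \eqref{45a}. Taking $s,\lambda$ large, these terms are absorbed into the left-hand side. This gives
\begin{equation}
\int_Q\varrho_1^{-2}(|p|^2+|\nabla p|^2)+\int_\Sigma\varrho_0^{-2}|p|^2\le C\,B(A,a;p,p).
\end{equation}
So $B(A,a;\cdot,\cdot)$ is equivalent to $B(0,0;\cdot,\cdot)$ on $\mathcal{P}_0$, and it extends to a continuous, coercive inner product on the Hilbert space $\mathcal{P}$.

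\textbf{Step 2: continuity of the right-hand side and Lax–Milgram.} The functional is
\begin{equation}
\ell(q)=\langle g,q\rangle_{L^2(Q)}+\langle y_0,q(0)\rangle_{L^2(\Omega)} .
\end{equation}
The $g$ term is handled by Cauchy–Schwarz with weights: $|\langle g,q\rangle|\le\|\varrho_1 g\|_{L^2}\|\varrho_1^{-1}q\|_{L^2}$, which is finite by \eqref{gcond} and Step 1. The $q(0)$ term is the main obstacle, since the weights blow up at $t=0$. I would first try the standard argument: apply an energy estimate to $q$ on $(0,T/2)$ with a cutoff $\theta(t)$ equal to $1$ near $0$. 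On $(0,3T/4)$ the norm of $q(0)$ is controlled by $\|L^*q\|$ and $\|q\|$ on $(T/2,3T/4)$, and there the weights $\varrho^{-2}$, $\varrho_1^{-2}$ are bounded below by positive constants. (If the weights also blow up at $0$, as in \eqref{weigh}, one must instead modify $\alpha$ on $[0,T/2]$ to be time-independent. I would say explicitly that this is the standard modification of \cite{fernandez2006null}.) This gives $|\ell(q)|\le C(\|\varrho_1 g\|+\|y_0\|)\|q\|_{\mathcal{P}}$. Lax–Milgram then yields a unique $p\in\mathcal{P}$ with $B(A,a;p,q)=\ell(q)$ for all $q\in\mathcal{P}$, and $\|p\|_{\mathcal{P}}\le C(\|\varrho_1 g\|+\|y_0\|)$.

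\textbf{Step 3: identification of $y$, $f$ and the null condition.} Set $y=\varrho^{-2}L^*_{A,a}(p)$ and $f=\varrho_0^{-2}p1_\gamma$. Then
\begin{equation}
\|y\|_{\mathcal{Y}}^2+\|f\|_{\mathcal{F}}^2=\int_Q\varrho^{-2}|L^*p|^2+\int_{\gamma\times(0,T)}\varrho_0^{-2}|p|^2=\|p\|^2_{\mathcal{P}},
\end{equation}
which gives \eqref{ioneq12}. The variational identity reads
\begin{equation}
\int_Q yL^*_{A,a}(q)+\int_{\gamma\times(0,T)}fq=\int_Q gq+\int_\Omega y_0q(0)\qquad\forall q\in\mathcal{P}_0 .
\end{equation}
In particular it holds for all $q\in C^2(\bar Q)$ satisfying the adjoint boundary condition, including those with $q(T)\neq0$, as long as the weights are finite there after truncation. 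Comparing with the transposition definition of the solution of \eqref{nullsyst}, $y$ is the transposition solution with control $f$. The integral $\int_\Omega y(T)q(T)$ is missing from the identity, so $y(T)=0$. Alternatively, $\varrho y\in L^2(Q)$ with $\varrho\to\infty$ as $t\to T$, together with $y\in C([0,T];L^2)$ from Lemma \ref{regubd}, forces $y(T)=0$.

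The delicate points are therefore the estimate of $q(0)$ in Step 2 and the density argument justifying the test functions in Step 3.
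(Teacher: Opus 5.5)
Your overall route (Lax--Milgram on $B(A,a;\cdot,\cdot)$, then $y=\varrho^{-2}L^*_{A,a}(p)$ with the control read off from $p$) is the right one, but the absorption in Step 1 fails as written. With the weights of Theorem~\ref{carleman1}, $\varrho_1^{-2}=s\lambda^2\xi\varrho^{-2}$ and $\varrho_2^{-2}=s^2\lambda^2\xi^2\varrho^{-2}$, so the inequality you write is the identity $\varrho_2^{-2}=s\xi\,\varrho_1^{-2}$. Now \eqref{45a} is a \emph{lower} bound on $s\xi$; indeed $\xi\ge 4/T^2$, so $s\xi\ge 4s/T^2$. Hence $\int_Q\varrho_2^{-2}|Ap|^2$ exceeds $\int_Q\varrho_1^{-2}|p|^2$ by a factor that grows with $s$, and taking $s$ large makes things worse. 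The left side of Theorem~\ref{carleman1} carries only $s\lambda^2\xi$ on $|\psi|^2$, not the usual $s^3\lambda^4\xi^3$ that would make $F_2=Ap$ harmless. Also $F_3\neq 0$, since the condition in \eqref{defP} contains $bp$. The repair uses $A\in W^{1,\infty}$: take $F_2=0$, $F_1=L^*_{A,a}(p)+A\cdot\nabla p+(\text{div}A)p-ap$ and $F_3=-(b+A\cdot\nu)p$. For the interior terms, $\varrho^{-2}=\varrho_1^{-2}/(s\lambda^2\xi)\le T^2\varrho_1^{-2}/(4s\lambda^2)$ controls the first- and zero-order contributions. For the boundary term, $\varrho_3^{-2}=\varrho_0^{-2}/(s^2\lambda^3\xi^2)$ does the same. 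So for $s$ large everything is absorbed, and you do get $\int_Q\varrho_1^{-2}(|p|^2+|\nabla p|^2)+\int_\Sigma\varrho_0^{-2}|p|^2\le C\,B(A,a;p,p)$.

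The sign in Step 3 is also off. Multiplying \eqref{nullsyst} by $q\in\mathcal{P}_0$, the term $-\int_\Sigma\partial_\nu y\,q$ produces
\[
\int_\Omega y(T)q(T)\,dx+\int_Q y\,L^*_{A,a}(q)\,dxdt-\int_{\gamma\times(0,T)}fq\,d\Sigma=\int_Q gq\,dxdt+\int_\Omega y_0q(0)\,dx .
\]
So $B(A,a;p,q)=\ell(q)$ identifies $y=\varrho^{-2}L^*_{A,a}(p)$ as the state driven by $f=-\varrho_0^{-2}p1_\gamma$, the sign used in Proposition~\ref{nullsemi}, not by $+\varrho_0^{-2}p1_\gamma$. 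Your identity with $+\int fq$ does not match the transposition definition.

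Finally, the time modification of the weight that you mention is not optional. With \eqref{weigh}, $\varrho\to\infty$ also at $t=0$, so besides the $q(0)$ estimate, $\varrho y\in L^2(Q)$ would be incompatible with $y(0)=y_0\neq0$. The Carleman estimate therefore has to be re-derived for the modified weight.

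With these corrections your argument works. It is also better ordered than the paper's, which first minimizes $S$, tacitly assuming some admissible $f$ with $S(f)<\infty$ (i.e.\ the controllability being proved). The paper then asserts coercivity of $B(A,a;\cdot,\cdot)$ and continuity of $l$ without proof.
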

\textit{Proof:} Consider the optimal problem \eqref{opnull}. The functional $S:\mathcal{F}\rightarrow \mathbb{R}$ is convex, coercive and lower semicontinuous. Therefore, it admits  a minimizer  $f\in \mathcal{F}$. Compute the Gateaux derivate and  for any $h\in\mathcal{F}$ 
\begin{equation}
    \int_{\gamma \times (0,T)}  \varrho_0^{2}(p+h)\,d\Sigma=0
\end{equation}
where it follows that 
\begin{equation}
    f=\varrho_0^{-2}p1_{\gamma}, \hspace{0.5cm}y=\varrho^{-2}L^*_{A,a}(p).
\end{equation}
Multiply equation \eqref{eq2} by $q\in \mathcal{P}$ to get the equality
\begin{equation}
  B(A,a;p,q)=\int_{Q}gq\,d\Sigma+\langle y_0,q(0)\rangle_{L^2(\Omega)} \hspace{0.5cm} \forall q\in \mathcal{P}.
\end{equation}
The above equation has one solution $p\in \mathcal{P}$ since the coercivity of $B(A,a;\cdot,\cdot)$ and the continuity of the operator
\begin{equation}
    l(q)= \int_{Q}gq\,d\Sigma+\langle y_0,q(0)\rangle_{L^2(\Omega)}.
\end{equation}

By the Hölder inequality it is possible to get inequality \eqref{ioneq12}.
\subsection{The semi-linear case}
Let $F$ be a Lipschitz function in $C^2(\mathbb{R})$. Observe that it is possible to linearize $F$ in the form
\begin{equation}\label{Flinar}
    F(x)=F_0(x)x
\end{equation}
where 
\begin{equation}\label{linl1}
    F_0(x)=
    \begin{cases}
       \displaystyle\frac{F(s)}{s} & s \neq 0\\
        0&s=0.
    \end{cases}
\end{equation}
Define
\begin{equation}
    L_0(p)=p_t-\Delta p
\end{equation}
and
\begin{equation}
    L_a:=L_{0,a}.
\end{equation}

By the global Lipschitz assumption, we can see that $F_0\in L^\infty(Q)$. The idea of the proof is to linearize equation \eqref{12.4}, find controls via Proposition \ref{prop1} and apply Schauder's fixed point theorem to find explicit solutions.  This is addressed in the next Proposition. In this case $A=0$, so the space $\mathcal{P}$ is considered with the boundary condition $\partial_\nu p+bp=0.$

\begin{proposition}\label{nullsemi}
    Let $T>0$, $y_0\in L^2(\Omega)$  and $g$ with \eqref{gcond} be given. There exists a  control $f\in \mathcal{F} $ such that
    \begin{equation}\label{eq11}
    \begin{array}[c]{llll}
        y_t-\Delta y +F(y) =g & \text{in} \, Q, \\
       \partial_\nu y+by= f1_\gamma & \text{on} \,\Sigma, \\
       y(0)=y_0  & \text{in} \, \Omega,\\
    \end{array}
\end{equation}
    with 
    \begin{equation}\label{sol123}
        \displaystyle f =-\varrho_0^{-2}p1_\gamma, \hspace{1cm} y=\varrho^{-2}L_{F'(y)}p,
    \end{equation}

and $p\in \mathcal{P}$ solves 
\begin{equation}\label{eqp}
    \begin{array}[c]{lll}
        	 \displaystyle \int_Q\varrho^{-2}L^*_{F'(y)}(p)L_{F_0(y)}^*(q)&\displaystyle+\int_{\gamma\times(0,T)}\!\!\!\!\!\!\!\!\!\!\!\!\varrho_0^{-2} p\, q\,d\Sigma= \int_Q gp\,dxdt+\int_\Omega y_0q(0)dx,    \forall q\in \mathcal{P} .\\
       
               \end{array}
		\end{equation}
\end{proposition}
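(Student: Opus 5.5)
The plan is a linearization followed by a Schauder fixed point, with Proposition \ref{prop1} (taking $A=0$) supplying the linear step. Write $F(y)=F_0(y)y$ as in \eqref{Flinar}--\eqref{linl1}. Since $F$ is globally Lipschitz with $F(0)=0$, we have $\|F_0\|_{L^\infty(\mathbb{R})}\le L$, where $L$ is the Lipschitz constant. For fixed $w\in L^2(Q)$, set $a_w:=F_0(w)\in L^\infty(Q)$, so that $\|a_w\|_\infty\le L$ uniformly in $w$. Apply Proposition \ref{prop1} with $A=0$ and $a=a_w$. This gives $p_w\in\mathcal P$ solving $B(0,a_w;p_w,q)=\langle g,q\rangle+\langle y_0,q(0)\rangle$ for all $q\in\mathcal P$. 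It also gives the pair $f_w=-\varrho_0^{-2}p_w1_\gamma$ and $y_w=\varrho^{-2}L^*_{a_w}(p_w)$, which solves the linear problem with potential $a_w$ and satisfies $y_w(T)=0$.

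The first key point is uniformity of the bound \eqref{ioneq12} in $w$. The Carleman inequality of Theorem \ref{carleman1} is applied to $L^*_{a}p=-p_t-\Delta p+ap$ by placing $ap$ in $F_1$. The term $\int_Q\varrho^{-2}|ap|^2$ is then absorbed by the left-hand side $\int_Q\varrho_1^{-2}|p|^2$ for $s,\lambda$ large, since $\varrho_1^{-2}=s\lambda^2\xi\varrho^{-2}$. The choice of $s,\lambda$ depends only on $L$. Hence the coercivity constant of $B(0,a_w;\cdot,\cdot)$ and the constant $C$ in \eqref{ioneq12} do not depend on $w$, and we get
\begin{equation}
\|y_w\|_{\mathcal Y}+\|f_w\|_{\mathcal F}\le C\big(\|g\|+\|y_0\|_{L^2(\Omega)}\big)=:R .
\end{equation}

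Define $\Lambda:L^2(Q)\to L^2(Q)$ by $\Lambda(w)=y_w$. It maps into the ball of radius $R$ in $L^2(Q)$, since $\varrho^{-1}$ is bounded and so $\|y\|_{L^2}\le C\|y\|_{\mathcal Y}$. For compactness, Lemma \ref{regubd} shows that $y_w$ is bounded in $H^{3/2,1/4}(Q)$, hence in $L^2(0,T;H^{1}(\Omega))\cap H^{1/4}(0,T;L^2(\Omega))$, uniformly in $w$. Proposition \ref{5.x.} with $X=H^1(\Omega)$, $B=Y=L^2(\Omega)$ (or the Aubin--Lions lemma) then gives relative compactness of $\Lambda(L^2(Q))$ in $L^2(Q)$.

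The main obstacle is continuity of $\Lambda$. Let $w_k\to w$ in $L^2(Q)$. Along a subsequence $w_k\to w$ a.e., so $F_0(w_k)\to F_0(w)$ a.e., using continuity of $F_0$ away from $0$ and boundedness. By dominated convergence, $a_{w_k}\phi\to a_w\phi$ in $L^2$ for every fixed $\phi$. The uniform bounds give $p_{w_k}\rightharpoonup\hat p$ weakly in $\mathcal P$, and hence $\varrho^{-1}L^*_{a_{w_k}}p_{w_k}\rightharpoonup\varrho^{-1}L^*_{a_w}\hat p$ weakly in $L^2(Q)$; this uses the uniform weighted $L^2$ control of $p_{w_k}$ from the Carleman inequality. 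Passing to the limit in the variational identity identifies $\hat p=p_w$ by uniqueness. Combined with the compactness above, this gives $y_{w_k}\to y_w$ strongly in $L^2(Q)$ for the whole sequence. The delicate part is the point $w=0$, where $F_0$ may be discontinuous. If needed, I would first replace $F_0(x)$ by $\int_0^1F'(\theta x)\,d\theta$, which is continuous because $F\in C^1$ and which is consistent with $F(0)=0$.

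Schauder's theorem then gives a fixed point $y=y_y$. This $y$ solves \eqref{eq11} with $f=-\varrho_0^{-2}p1_\gamma$, satisfies $y\in\mathcal Y$ and hence $y(T)=0$, and satisfies \eqref{eqp} and \eqref{sol123} by construction.
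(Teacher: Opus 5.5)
Your Schauder step is fine and does give existence of a null control for \eqref{eq11}. On that part you are in fact more careful than the paper: you check that the constant in \eqref{ioneq12} is uniform in $w$, you prove continuity of $\Lambda$, and you repair $F_0$ at $x=0$ via $F_0(x)=\int_0^1F'(\theta x)\,d\theta$. The gap is your last sentence, where you claim \eqref{sol123} and \eqref{eqp} hold by construction. By construction your fixed point satisfies $y=\varrho^{-2}L^*_{F_0(y)}(p)$ together with
\[
\int_Q\varrho^{-2}L^*_{F_0(y)}(p)\,L^*_{F_0(y)}(q)\,dxdt+\int_{\gamma\times(0,T)}\varrho_0^{-2}pq\,d\Sigma=\int_Q gq\,dxdt+\int_\Omega y_0q(0)\,dx \quad \text{for all } q\in\mathcal{P}.
\]
This is a frozen-coefficient system, with $F_0(y)$ in both slots. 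The statement instead has $L^*_{F'(y)}(p)$ in the formula for $y$ and in the first slot of \eqref{eqp}. Since $F'(x)\neq F(x)/x$ wherever $(F(x)/x)'\neq 0$, the two systems differ for any genuinely nonlinear $F$ (e.g.\ $F=\sin$), and nothing in the fixed-point construction converts one into the other.

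The underlying reason is this. Your fixed point minimizes $S$ only over the affine set of the linear problem with potential frozen at $F_0(y)$. The admissible directions $(z,h)$ there solve $z_t-\Delta z+F_0(y)z=0$, $\partial_\nu z+bz=h1_\gamma$, $z(0)=0$. The tangent directions of the true constraint set $\{M(y,f)=0\}$, with $M$ as in \eqref{mqas}, instead solve $z_t-\Delta z+F'(y)z=0$. So your control is in general not even a critical point of $S$ on the nonlinear constraint set. There is then no reason for your $p$, or any other $p$, to satisfy \eqref{sol123}--\eqref{eqp}.

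What is missing is the paper's second stage, which is where $F'(y)$ comes from.
\begin{enumerate}
\item Take $(y,f)$ to be a minimizer of $S$ under the nonlinear constraint $M(y,f)=0$. The constraint set is nonempty by your Schauder step. A minimizing sequence has an admissible limit, because your compactness lets you pass to the limit in $F(y_n)$ and $S$ is weakly lower semicontinuous.
\item Apply the multiplier rule of Proposition \ref{dv.yu}. The derivative $M'(y,f)(z,h)=z+H_0(F'(y)z)-G(h1_\gamma)$ has an adjoint whose first component is $\psi+F'(y)H_0^*(\psi)=L^*_{F'(y)}(p)$, with $p=H_0^*(\psi)$. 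This gives $y=\varrho^{-2}L^*_{F'(y)}(p)$, up to the sign of $p$.
\item Lemma \ref{lemmanm}, $G^*(L_0^*(p))=p$, then gives $f=-\varrho_0^{-2}p1_\gamma$.
\item Only after substituting this $y$ into the state equation $L_{F_0(y)}(y)=g$ do you obtain the mixed bilinear form in \eqref{eqp}.
\end{enumerate}
You would also need a qualification condition to normalize the multiplier of $S$ to $1$, a point the paper passes over.

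As written, your argument proves the existence part of the proposition, but with a different, frozen-coefficient characterization of the control, not the stated \eqref{sol123}--\eqref{eqp}.
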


\begin{proof}
    Let $z\in L^2(Q)$. Recall that  $F_0(z)\in L^\infty(Q)$. Linearize \eqref{eq1} using the identity \eqref{Flinar} for the function $F$  and invoke Proposition \ref{prop1} to see that there exists a follower control $f_z\in \mathcal{F}$ such that 
 \begin{equation}\label{eq1lin}
    \begin{array}{lll}
        \partial_ty_z-\Delta y_z +F_0(z)y_z =0 & \text{in} \, Q, \\
       \partial_\nu y_z+by_z= f_z1_\gamma & \text{on} \,\Sigma, \\
       y_z(0)=y_0, y_z(T)=0  & \text{in} \, \Omega.\\
    \end{array}
\end{equation}
 
  Define the map $\Lambda: L^2(Q)\rightarrow L^2(Q),\,\Lambda(z)=y_z$. By the inequality of Proposition \eqref{regubd} we know that $\Lambda(L^2(Q))$ is a bounded set in $H^{3/2,1/4}(Q)$. From Proposition \ref{5.x.} we know that  $\Lambda(L^2(Q))\hookrightarrow L^2(Q)$ is a compact embedding , then by Schauder fixed point theorem, it exists $\hat{y}\in L^2(Q)$ such that 
  
   \begin{equation}\label{eq1lin2}
    \begin{array}[c]{lll}
        \hat{y}_t-\Delta    \hat{y} +F(   \hat{y}) =0 & \text{in} \, Q, \\
       \partial_\nu    \hat{y}+b   \hat{y}= f_z1_\gamma & \text{on} \,\Sigma, \\
          \hat{y}(0)=y_0,    \hat{y}(T)=0  & \text{in} \, \Omega.\\
    \end{array}
\end{equation}

Once the existence of the solution is established, it remains to show that $\hat{f}$ solves the optimization problem \eqref{opnull}.
\textbf{Step 1}. Let $f_n\in \mathcal{F}$ a minimizing sequence of the functional $S(\cdot)$ i.e 
\begin{equation}
    \liminf_{n\rightarrow \infty} S(f_n)=\inf _{f\in \mathcal{F}}S(f).
\end{equation}
Since  $f_n\in \mathcal{F}$ is a minimizing sequence, it is bounded in $\mathcal{F}$. Therefore, there exists a subsequence $f_{n_k}$ that is weakly convergent to $\tilde{f}\in \mathcal{F}$ . Then the associated solution  $y_n$ has a subsequence  that converges to $\tilde{y}$ that solves \eqref{eq1lin}.  Then because the functional $S(v,\cdot)$ is lower semicontinous 
\begin{equation}
    S(\tilde{f})\leq \liminf_{n \rightarrow \infty }S(f_{n_k})=\inf_{f\in \mathcal{F}}
    S(f).
\end{equation}
 
\end{proof}
\textbf{Step 3:} Once it is ensured the existence of a minimizer for the functional $S(v,\cdot)$, we proceed to compute explicit solutions for the control problem. Define $\bar{y}\in C(0,T;H^1_0(\Omega))\cap C^1(0,T;L^2(\Omega))$  as  the solution to the system
		\begin{equation}
					\left\{\begin{array}[c]{lll}
						\bar{y}_t-\Delta \bar{y}=0 &\hbox{in} & Q,\\
                        \bar{y}=0 & \hbox{on} & \Sigma,\\
                        \bar{y}(0)=y_0 &\hbox{in} & \Omega.
				\end{array}\right.
		\end{equation}
Define the map $H_0:L^2(Q)\longrightarrow L^2(Q)$ as $H_0(q)=z$ where $z$ is the solution to the problem
			\begin{equation}\label{.fg3}
						\left\{	\begin{array}[c]{lll}
								z_t-\Delta z=q &\hbox{in} & Q,\\
                        \partial_\nu z +bz=0 & \hbox{on} &  \Sigma, \\
                        z(0)=0 &\hbox{in} & \Omega.
						\end{array}\right.
			\end{equation}
Observe that $H^*_0:L^2(Q)\longrightarrow L^2(Q)$ is given by $H^*_0(\psi)=\varphi$ where $\varphi$ solves the equation
				\begin{equation}\label{t--g}
							\left\{\begin{array}[c]{lll}
								-\varphi_t-\Delta \varphi =\psi &\hbox{in} & Q,\\
                                \partial _\nu \varphi+b\varphi =0  &\hbox{on} &\Sigma,\\
                                \varphi(T)=0  &\hbox{in} & \Omega.
						\end{array}\right.
				\end{equation}
            For each $q\in L^2(Q)$, it holds that  $H_0(q)\in L^2(0,T;H^1_0(\Omega))\cap H^1(0,T;L^2(\Omega))$ . Define the boundary operator $G:L^2(\Sigma)\to H^{1/2,1/4}(Q)\subset L^2(Q)$ given by $G(\beta)=f$ , where $f$ is a solution to the boundary value problem
		\begin{equation}
				\left\{	\begin{array}[c]{lll}
						f_t-\Delta f=0 &\hbox{in} & Q,\\
                        \partial_\nu f+bf=\beta & \hbox{on} &  \Sigma, \\
                        f(0)=0 &\hbox{in} & \Omega.	
				\end{array}\right.
		\end{equation}
  By Proposition \eqref{5.x.}, the operator $G:L^2(\Sigma)\longrightarrow L^2(\Omega)$ is compact. Define the map $M:L^2(0,T;H^1(\Omega))\times L^2(\Sigma)\longrightarrow  L^2(Q)$ given by 

	\begin{equation}\label{mqas}
			M(y,f)=y+H_0(F(y))-G(f1_{\gamma})-\bar{y}.
	\end{equation}
   
This map is compact and it is straightforward to verify that the condition $M(y,f)=0$  is equivalent to \eqref{eq1}.  By the above arguments, the null control minimized the $L^2$ functional; thus, it remains to derive the explicit form of the control. To this end, consider the following optimization problem 		\begin{equation}\label{p54t}
        \begin{array}[c]{lll}
          \displaystyle  \text{min}\frac{1}{2}\int_{Q}\varrho^2|y|^2 +\frac{1}{2}\int_{\gamma\times (0,T)}\varrho_0^2|f|^2\\
 		  M(y,f)=0\, ,(y,f)\in \mathcal{Y}\times \mathcal{F}.
        \end{array}
 		\end{equation}    
The operators $H_0$ and $G$ are of class $C^1$ so it is the operator $M$. Restrict the operator $M$ to the subspace$\mathcal{Y}\times \mathcal{F}$. Given any directions $(z,g) \in\mathcal{Y}\times \mathcal{F}$ the derivative of $M$ is the operator $M'$  given by
  		\begin{equation}
  				M'(y,f)(z,g)=z+H_0(F'(y)z)-G(g1_{\gamma}).
  		\end{equation}
  Since $M$ is compact, also $M'(y,f)$ is compact and it is possible to conclude that $\text{ker} M'(y,f)^\bot=\text{Rank} M'(y,f)^*$. It is necessary to compute the dual operator .
  \begin{lemma}
  Let $M'(y,f)^*:L^2(Q)\longrightarrow \mathcal{Y}\times \mathcal{F}$. Its dual is given by

			\begin{equation}\label{s.gh}
    M'(y,f)^*(\psi)=(H_0^*(\psi),-G^*(\psi)1_{\gamma}).
\end{equation}
	
\end{lemma}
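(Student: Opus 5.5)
The identity \eqref{s.gh} follows from the definition of the adjoint, once the adjoints of $H_0$ and $G$ are computed separately. Since $M'(y,f)$ acts componentwise, for every $\psi\in L^2(Q)$ and every direction $(z,g)\in\mathcal{Y}\times\mathcal{F}$ we expand
\begin{equation}
\langle M'(y,f)(z,g),\psi\rangle_{L^2(Q)}=\langle z,\psi\rangle_{L^2(Q)}+\langle H_0(F'(y)z),\psi\rangle_{L^2(Q)}-\langle G(g1_\gamma),\psi\rangle_{L^2(Q)}.
\end{equation}
The plan is to rewrite each term as a pairing with $z$ in $L^2(Q)$ or with $g$ in $L^2(\gamma\times(0,T))$. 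The first term is already of this form.

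For the second term we use the description of $H_0^*$ already stated: $H_0^*(\psi)=\varphi$ solves \eqref{t--g}. We multiply the equation of $w=H_0(q)$, with $q=F'(y)z$, by $\varphi$ and integrate by parts in $Q$. The initial term vanishes because $w(0)=0$, and the final term vanishes because $\varphi(T)=0$. The lateral terms are
\[
\int_\Sigma(\partial_\nu w\,\varphi-w\,\partial_\nu\varphi)\,d\Sigma,
\]
and they cancel because both functions satisfy the same Fourier condition: it gives $\partial_\nu w\,\varphi-w\,\partial_\nu\varphi=-bw\varphi+bw\varphi=0$. This yields $\langle H_0(q),\psi\rangle=\langle q,H_0^*\psi\rangle=\langle z,F'(y)H_0^*(\psi)\rangle$. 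The computation should be done first for smooth data, where the integrations by parts are justified, and then extended by density using the continuity of $H_0$ on $L^2(Q)$.

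For the third term we repeat the same computation with $u=G(g1_\gamma)$ and the same $\varphi=H_0^*(\psi)$. The time terms vanish as before, but now $\partial_\nu u+bu=g1_\gamma$, so the boundary term no longer cancels and leaves
\[
\int_\Sigma g1_\gamma\,\varphi\,d\Sigma .
\]
Hence $\langle G(g1_\gamma),\psi\rangle=\langle g,\varphi|_{\gamma\times(0,T)}\rangle$. This means $G^*(\psi)$ is the trace on $\Sigma$ of $H_0^*(\psi)$, restricted to $\gamma$ by the factor $1_\gamma$, with the minus sign from the definition of $M'$.

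Collecting the three terms gives
\begin{equation}
M'(y,f)^*(\psi)=\bigl(\psi+F'(y)H_0^*(\psi),\,-G^*(\psi)1_\gamma\bigr).
\end{equation}
This has the structure of \eqref{s.gh}, provided the first component is understood as the full adjoint of $I+H_0\circ F'(y)$; I would state this identification explicitly. The adjoint should also be taken with respect to the $L^2$ pairings and restricted to $\mathcal{Y}\times\mathcal{F}$, and this must be compatible with the weights. That compatibility holds because $\mathcal{Y}\subset L^2(Q)$ and $\mathcal{F}\subset L^2(\Sigma)$ with continuous inclusions.

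The main obstacle is the regularity needed to give meaning to the trace of $\varphi$ on $\Sigma$ and to the boundary integration by parts. I would use Lemma~\ref{regubd} (applied in backward time) to get $\varphi\in H^{3/2,1/4}(Q)$, which has an $L^2(\Sigma)$ trace. Then I would pass to the limit in the identities from smooth approximations of $\psi$ and $g$.
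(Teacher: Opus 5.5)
Your integration-by-parts computation is correct. The problem is your last step, where you say your formula ``has the structure of'' \eqref{s.gh}: what you derived contradicts the statement in its first component, and you should say so rather than reinterpret the notation.

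In this paper $H_0^*$ is fixed by \eqref{t--g} as the backward solution operator, and it is used that way immediately afterwards ($p:=H_0^*(\psi)$ with $L_0^*(p)=\psi$). So it cannot be read as the adjoint of $I+H_0\circ F'(y)$. With that meaning, the identity $\psi+F'(y)H_0^*(\psi)=H_0^*(\psi)$ would force $\varphi=H_0^*(\psi)$ to solve $-\varphi_t-\Delta\varphi+(F'(y)-1)\varphi=0$ with the homogeneous Robin condition and $\varphi(T)=0$. Hence $\varphi=0$ and $\psi=0$. Even for $F'(y)\equiv 0$ the first component is $\psi$, not $H_0^*(\psi)$.

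So \eqref{s.gh} is false as written and cannot be proved. The paper states the lemma without proof, so there is no argument there to compare with. What your computation actually establishes, and what you should state as the lemma, is
\begin{equation*}
M'(y,f)^*(\psi)=\bigl(\psi+F'(y)H_0^*(\psi),\,-1_\gamma\, H_0^*(\psi)|_{\Sigma}\bigr)=\bigl(L^*_{F'(y)}(p),\,-1_\gamma\, p|_{\Sigma}\bigr),\qquad p=H_0^*(\psi),
\end{equation*}
with $L_a^*(p)=-p_t-\Delta p+ap$. This corrected version is the one the rest of the argument needs. Inserted into the multiplier rule it gives $y=-\varrho^{-2}L^*_{F'(y)}(p)$ and $f=\varrho_0^{-2}p1_\gamma$. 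This matches the form $y=\varrho^{-2}L^*_{F'(y)}(p)$ used in \eqref{p7.64}, up to the sign convention for $p$. The literal statement instead gives $y=\varrho^{-2}H_0^*(\psi)=\varrho^{-2}p$, as written in \eqref{1.d}, which is incompatible with \eqref{p7.64}.

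Two smaller points.

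First, your remark on the weights does not settle the codomain. Continuity of $\mathcal{Y}\hookrightarrow L^2(Q)$ and $\mathcal{F}\hookrightarrow L^2(\Sigma)$ only makes the restriction of $M'(y,f)$ to $\mathcal{Y}\times\mathcal{F}$ bounded. Its adjoint for the $L^2$ pairing takes values in $L^2(Q)\times L^2(\gamma\times(0,T))\subset\mathcal{Y}'\times\mathcal{F}'$. It does not in general land in $\mathcal{Y}\times\mathcal{F}$ as the statement claims: since $\varrho$ blows up at $t=0$ and $t=T$, typically $\varrho\,(\psi+F'(y)H_0^*(\psi))\notin L^2(Q)$. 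If you want values in $\mathcal{Y}\times\mathcal{F}$, take the Hilbert adjoint for the weighted inner products, which multiplies the two components by $\varrho^{-2}$ and $\varrho_0^{-2}$. Both choices yield the same optimality system: the first is paired with the $L^2$ gradient $(\varrho^2y,\varrho_0^2f)$, the second with $(y,f)$. You should fix one.

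Second, the trace issue is easier than you make it. Test the energy-level weak formulation of $u=G(g1_\gamma)$ with $\varphi=H_0^*(\psi)$, and that of $\varphi$ with $u$. Using the symmetry of $\int_\Omega\nabla u\cdot\nabla v\,dx+\int_\Gamma buv\,d\Gamma$, this gives $\int_Q u\psi\,dxdt=\int_\Sigma g1_\gamma\varphi\,d\Sigma$ directly. Indeed $\frac{d}{dt}\int_\Omega u\varphi\,dx$ integrates to $0$ because $u(0)=0$ and $\varphi(T)=0$, and $\varphi\in L^2(0,T;H^1(\Omega))$ already has an $L^2(\Sigma)$ trace. No appeal to Lemma \ref{regubd} or to smooth approximation is needed.
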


 Let $(y,f)\in \mathcal{Y}\times \mathcal{F}$  a solution to \eqref{p54t}. Consider $\lambda >0$ and $(h,k)\in \text{Rank}(M'(y,f)^*)$ given by the Dubovitsky-Milyoutin formalism in Proposition \ref{dv.yu}.  For $\psi\in L^2(Q)$ such that $(h,k)=M'(y,f)^*(\psi)$, observe that after normalizing $\lambda=1$, \eqref{1.3gaw} takes the form
 	\begin{equation}\label{ehgjy684}
 					(\varrho^2y,\varrho^2_0f)+\lambda (H_0^*(\psi),G^*(\psi)1_{\gamma})=0.
 	\end{equation}
  Define  $p:=H_0^*(\psi)$ that satisfies $L_0^*(p)=\psi$.  It is possible to deduce that 
    		\begin{equation}\label{1.d}
    				\begin{array}[c]{lll}
    					\displaystyle	y&=\varrho^{-2}H_0^*(\psi),\\
                            \displaystyle f
                            &=-\varrho_0^{-2}G^*(L_0^*(p))1_\gamma.
    				\end{array}
    		\end{equation}
We find that the pair $(y,f)\in \mathcal{Y}\times \mathcal{F}$  solves the problem \ref{p54t}  and then $p\in \mathcal{P} $.  

\begin{lemma}\label{lemmanm}
    Let $p\in\mathcal{P}$. Then $G^*(L_0^*(p))= p$.
\end{lemma}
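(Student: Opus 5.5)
The plan is to identify $G^*$ explicitly through a Green (duality) formula and then evaluate it on $\psi=L_0^*(p)$. I first treat smooth $p\in\mathcal{P}_0$, where here $A=0$, so $\partial_\nu p+bp=0$ on $\Sigma$. I also impose $p(T)=0$, which is exactly the situation in \eqref{1.d}, where $p=H_0^*(\psi)$ solves \eqref{t--g}. Afterwards I pass to the completion $\mathcal{P}$.

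\textbf{Step 1 (duality identity).} Fix $\beta\in L^2(\Sigma)$ and let $f=G(\beta)$. Multiply $f_t-\Delta f=0$ by $p$, integrate over $Q$, and integrate by parts once in $t$ and twice in $x$. This gives
\begin{equation}
\int_Q G(\beta)\,L_0^*(p)\,dxdt=\int_\Sigma\big(\partial_\nu f\,p-f\,\partial_\nu p\big)\,d\Sigma-\int_\Omega f(T)p(T)\,dx+\int_\Omega f(0)p(0)\,dx .
\end{equation}
Since $f(0)=0$ and $p(T)=0$, both time terms vanish. On the boundary, $\partial_\nu f=\beta-bf$ and $\partial_\nu p=-bp$, so the two Robin terms $-bfp$ and $+bfp$ cancel. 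What remains is
\begin{equation}
\int_Q G(\beta)\,L_0^*(p)\,dxdt=\int_\Sigma \beta\, p\,d\Sigma \qquad \forall \beta\in L^2(\Sigma).
\end{equation}
By the definition of the adjoint, $\langle G(\beta),\psi\rangle_{L^2(Q)}=\langle \beta,G^*\psi\rangle_{L^2(\Sigma)}$. Taking $\psi=L_0^*(p)$ and using that $\beta$ is arbitrary yields $G^*(L_0^*(p))=p|_\Sigma$.

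For rough $\beta$ the integrations by parts are justified by first taking $\beta$ smooth. Then $f$ is a strong solution, with the regularity given by Lemma \ref{regubd}. The identity extends to all $\beta\in L^2(\Sigma)$ by density, since both sides are continuous in $\beta$: the left side through the estimate of Lemma \ref{regubd}, the right side by Cauchy--Schwarz.

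\textbf{Step 2 (extension to $\mathcal{P}$).} For general $p\in\mathcal{P}$, take $p_n\in\mathcal{P}_0$ converging to $p$ in $\|\cdot\|_{\mathcal{P}}$. By the definition of the norm, $\varrho^{-1}L_0^*(p_n)\to\varrho^{-1}L_0^*(p)$ in $L^2(Q)$. By the Carleman inequality of Theorem \ref{carleman1}, $\varrho_0^{-1}p_n\to\varrho_0^{-1}p$ in $L^2(\Sigma)$.

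Next I pass to the limit in $\int_Q G(\beta)L_0^*(p_n)=\int_\Sigma\beta p_n$. I do this for $\beta$ in a dense class, namely smooth functions compactly supported in time inside $(0,T)$. For such $\beta$, $G(\beta)$ vanishes near $t=0$, so the weights can be handled: the factors $\varrho$ and $\varrho_0$ are bounded on the support of $\beta$. This identifies $G^*(L_0^*(p))=p$ on $\Sigma$.

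\textbf{Main obstacle.} The delicate point is the terminal term $\int_\Omega f(T)p(T)\,dx$. In general $G(\beta)(T)\neq0$, so the identity needs $p(T)=0$. I would secure this in one of two ways. One option is to use that $p$ arises as $H_0^*(\psi)$, as in \eqref{1.d}. The other is to argue that the weights $\varrho^{-2}$ and $\varrho_0^{-2}$ degenerate as $t\to T$, so that elements of $\mathcal{P}$ satisfy $p(T)=0$ in a trace sense. If neither holds, the statement must be read as $G^*(L_0^*(p))=p|_\Sigma$ modulo the term generated by $p(T)$, and I would state the lemma for $p$ with vanishing terminal data.
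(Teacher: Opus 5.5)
Your Step~1 is the paper's argument: pair $L_0^*(p)$ with $G(\beta)$, use $L_0G(\beta)=0$ and $G(\beta)(0)=0$, and let the Robin terms cancel. You are right that the terminal term $\int_\Omega G(\beta)(T)p(T)\,dx$ cannot be ignored; the paper's proof silently drops it. But it is a genuine obstruction, not a technicality. As stated for all $p\in\mathcal{P}$ the lemma is false, because $\mathcal{P}_0$ imposes nothing at $t=T$. For instance, if $b\equiv 0$ then $p\equiv 1$ lies in $\mathcal{P}_0\subset\mathcal{P}$ and $L_0^*(p)=0$, so $G^*(L_0^*(p))=0\neq p|_\Sigma$. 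In general your identity gives $G^*(L_0^*(p))=(p-\phi)|_\Sigma$, where $\phi$ solves $-\phi_t-\Delta\phi=0$, $\partial_\nu\phi+b\phi=0$, $\phi(T)=p(T)$. This also shows your second way of securing $p(T)=0$ is wrong. The weights $\varrho^{-2},\varrho_0^{-2}$ vanish as $t\to T$, so the $\mathcal{P}$-norm gives \emph{no} control near $T$ and cannot force any terminal trace.

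Step~2 fails as well. For $\beta$ supported in $[\delta,T-\delta]$, $G(\beta)$ vanishes for $t<\delta$ but not near $t=T$: it keeps evolving after $\beta$ switches off, and $G(\beta)(T)\neq0$ in general. Meanwhile $\varrho=e^{s\alpha}$ blows up there, so $\varrho\,G(\beta)\notin L^2(Q)$ in general. Hence $\int_Q G(\beta)L_0^*(p_n)=\int_Q (\varrho G(\beta))(\varrho^{-1}L_0^*(p_n))$ cannot be passed to the limit using only $\varrho^{-1}L_0^*(p_n)\to\varrho^{-1}L_0^*(p)$ in $L^2(Q)$. Boundedness of $\varrho_0$ on the support of $\beta$ only handles the right-hand side. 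Worse, for $p$ in the completion, $L_0^*(p)$ need not lie in $L^2(Q)$, so $G^*(L_0^*(p))$ is not even defined.

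The repair is your first option, and it makes Step~2 unnecessary. In \eqref{1.d} one has $p=H_0^*(\psi)$ with $\psi\in L^2(Q)$, so $L_0^*(p)=\psi\in L^2(Q)$ and $p(T)=0$ by \eqref{t--g}. Test the variational formulation of $f=G(\beta)$ with $p$, and that of $p$ with $f$, then subtract. Using $\int_0^T(\langle f_t,p\rangle+\langle p_t,f\rangle)\,dt=\big[\int_\Omega fp\,dx\big]_0^T=0$, this gives $\int_Q G(\beta)\psi\,dxdt=\int_\Sigma\beta p\,d\Sigma$ for every $\beta\in L^2(\Sigma)$, i.e.\ $G^*(\psi)=p|_\Sigma$. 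This weak-level argument also removes the need for your detour through smooth $\beta$; with $b$ only in $L^\infty(\Sigma)$, strong solutions are not guaranteed anyway. The lemma should be stated for such $p$ (vanishing terminal data and $L_0^*(p)\in L^2(Q)$), not for all of $\mathcal{P}$.
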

 \begin{lemmaproof} 
     Follow a standard density argument. Take $p\in \mathcal{P}_0$ for simplicity. For any $q\in C^\infty(Q)$, integrate $G^*(L_p(p))q$ in $\Sigma$, apply integration by parts and consider that $L_0(G(q))=0$ to get
     \begin{equation}
         \int_\Sigma G^*(L_0^*(p))q\,d\Sigma=\int_\Sigma p\partial_\nu G(q)-\partial_\nu pG(q)\,d\Sigma 
     \end{equation}
Considering boundary conditions 
\begin{equation}
    \int_\Sigma G^*(L_0^*(p))q\,d\Sigma =\int_{\Sigma}pq\,d\Sigma, \,\,\,\,\,\forall q\in C^\infty(Q).
\end{equation}
 \end{lemmaproof}
 
 From this lemma, it is possible to obtain the explicit solution \eqref{sol123}.  Again, since $(y,f[v])\in \mathcal{Y}\times \mathcal{F}$   fulfills  the restrictions of  Problem  \ref{p54t}. By identity \eqref{1.d} it is possible to obtain the fourth order equation 

\begin{equation}\label{p7.64}
                				\left\{\begin{array}[c]{lll}
                				\displaystyle L_{F_0(y)}\Big(\varrho^{-2}(L^*_{F'(y)}(p))\Big)=0 &\hbox{in} & Q,\\
                                \varrho^{-2}L^*_{F'(y)}(p)=\varrho_0^{-2} p1_{\gamma}&\hbox{on} & \Sigma,\\
                                \varrho^{-2}L^*_{F'(y)}(p)(0)=y_0 &\hbox{in} & \Omega.
                		\end{array}\right.
                \end{equation}
\textbf{Step 4}. Take $p'\in \mathcal{P}$ and multiply by it in equation \ref{p7.64}. Using integration by parts and boundary conditions, the integral form for \eqref{p7.64}  is given by
	\begin{equation}\label{inhgy65}
    \begin{array}[c]{lll}
        	 \displaystyle \int_Q\varrho^{-2}L^*_{F'(y}(p)L_{F_0(y)}^*(p')&\displaystyle+\int_{\gamma\times(0,T)}\!\!\!\!\!\!\!\!\!\!\!\!\varrho_0^{-2} p\, p'\,d\Sigma=
            \displaystyle \int_Q gp'\,d\Sigma+\int_\Omega y_0p'(0)dx.   
               \end{array}
		\end{equation}
Then, by choosing $p':=p$, it is possible to get
		\begin{equation}\label{biy576}
        \begin{array}[c]{lll}
				\displaystyle\int_Q\varrho^{-2}L^*_{F'(y)}(p)L_{F_0(y)}^*(p)+\int_{\gamma \times(0,T)}\varrho_0^{-2}| p|^2\, d\Sigma =
          \displaystyle      \int_{Q} gp\,dxdt+\int_\Omega y_0p(0)dx .
                \end{array}
		\end{equation}
Now, it is necessary to make some estimates from \eqref{biy576}. We first analyze  estimates for the solution of the Helmholtz equation.  By inequality

For this, apply  H\"older and Young inequalities. First, it is important to note that the weight $\varrho$ is bounded in the interval $[0,T/2]$ and it is possible to see that $\|p(0)\|_{L^2(\Omega)}\leq \max_{t\in [0,T/2]}\|p(t)\|_{L^2(\Omega)}$. Denote $M_1=\sup_{y\in \mathbb{R}}|F'(y)|$. From Theorem \ref{carleman1} fixing $\lambda>\lambda_0 $ and $s>s_0$ the following inequality holds : 
\begin{equation}
s^{3/2}>M\sqrt{2}\lambda^{-2}\sup_{Q}\xi^{-3/2}.
\end{equation}

Then, define $S:=\sup_Q\frac{\varrho_0}{\varrho}< 1/(M\sqrt{2})$ . It is not difficult to see that

\begin{equation}
\int_{Q}\varrho^{-2}|p|^2dxdt\leq S^2\int_Q\varrho_{0}^{-2}|p|^2\,dxdt.
\end{equation}
\begin{lemma}
    
\end{lemma}

There exists a positive number $\beta$ such that 
\begin{equation}
    \frac{1}{M}\frac{S^2M^2}{1-M^2S^2}<\beta<\frac{1}{M}.
\end{equation}

.Then 
     
		\begin{equation}\label{ine.75648}
				\begin{array}[c]{lll}
				B(0;p,p)&= \displaystyle \int_\Omega y_0(x)p(x)\,dx+\int_{Q}gp\,dxdt \\
                
                & \displaystyle- \int_Q \varrho^{-2}\left(F_0(y)L^*_0(p)p+F'(y)L_0^*(p)p+F'(y)F_0(y)|p|^2\right)\,dxdt \\
                & \displaystyle - \int_Q \varrho^{-2}\left(F_0(y)L^*_0(p)p+F'(y)L_0^*(p)p+F'(y)F_0(y)|p|^2\right)\,dxdt \\
                &  \displaystyle \leq \|y_0\|_{L^2(\Omega)}\max_{t\in[0,T/2]}\|p(t)\|_{L^2(\Omega)} +\| g\|_{L^2(Q)}\|p\|_{\mathcal{P}}+M^2\int_{Q}\varrho^{-2}|p|^2 \,dxdt\\
                & \displaystyle + 2M\int_Q\varrho^{-2}|L_0^*(p)||p|\,dxdt.  \\               
          \end{array}
          \end{equation}
           Then, by Young inequality  with parameter $\beta$ it is possible to bound
           					\begin{equation}
           							 2M\int_Q\varrho^{-2}|L_0^*(p)||p|\,dxdt \leq M\beta\int_Q\varrho^{-2}|L^*_0(p)|^2\,dxdt+\frac{M}{\beta}\int_{Q}\varrho^{-2}|p|^2\,dxdt,
           					\end{equation}
Since $\varrho_0< S\varrho$, we have

			\begin{equation}
						\int_Q\varrho^{-2}|p|^2\,dxdt\leq S^2\int_Q\varrho_0^{-2}|p|^2\,dxdt.
			\end{equation}

In conclusion, 
          \begin{equation}
          		\begin{array}[c]{lll}
                |B(0,p,p)|\le &\displaystyle C\left(\|g\|_{L^2(Q)}+\|y_0\|_{L^2(\Omega)}\right)B(0,p,p)^{1/2}+M\beta\int_{Q}\varrho^{-2}|L_0^*(p)|^2\,dxdt+\left(M^2+\frac{M}{\beta}\right)S^2\int_{Q}\varrho_0^{-2}|p|^2\\
                &\le \displaystyle C\left(\|g\|_{L^2(Q)}+\|y_0\|_{L^2(\Omega)}\right)B(0,p,p)^{1/2}+
                \max\left\{ \beta M,\left(M^2+\frac{M}{\beta}\right)S^2\right\}B(0,p,p).
				\end{array}
		\end{equation}
        
 Remember that $\beta M<1$, and   from inequality $\frac{1}{M}\frac{S^2M^2}{1-M^2S^2}<\beta$  we get $(M^2+M/\beta)S^2<1$ and the term  $$\max\left\{ \beta M,\left(M^2+\frac{M}{\beta}\right)S^2\right\}B(0,p,p) $$ can be absorbed to the left hand side to get 
     
    		\begin{equation}
    				\begin{array}[c]{lll}
   \|f[v]\|_{\mathcal{F}}+\|y\|_{\mathcal{Y}} <C\left(\|y_0\|_{L^2(Q)}+\|g\|_{L^2(Q)}\right).
    				\end{array}
    		\end{equation}

   \subsection{Fixed point iterations.}
In view of  Proposition \ref{nullsemi}, we can find solutions to the control problem via fixed-point iterative methods 
\begin{enumerate}
    \item Choose $y_1\in \mathcal{Y}$. 
    \item Compute $p_1\in \mathcal{P}$ via \eqref{eqp} considering the  potentials are $F'(y_1)$ and $F_0(y_1)$.
    \item Compute $y_{n+1}\in \mathcal{Y}$ via this recurrence.
Therefore, it  is possible to estimate
 \begin{equation}\label{9--.n}
 \|p_n\|_{\mathcal{P}}+\|y_n\|_{\mathcal{Y}}\leq C\left(\|g\|_{L^2(Q)}+\|y_0\|_{L^2(\Omega)}\right). 
 \end{equation}
Then, by using  compactness argument \cite{simon1986compact}, it is possible to get a subsequence $y_{n_k}\rightarrow y$ strongly in $L^2(Q)$. 

\end{enumerate}
 \section{Conclusions and further problems.}

 An  interesting and difficult  problem is the  null controllability  where a non linearity affects the boundary. Consider
\begin{equation}\label{eq111}
    \begin{array}[c]{lll}
        y_t-\Delta y +ay =g & \text{in} \, Q, \\
       \partial_\nu y+F(y)= f1_\gamma & \text{on} \,\Sigma, \\
       y(0)=y_0  & \text{in} \, \Omega.\\
    \end{array}
\end{equation}
 This case entails additional difficulties, since a fixed-point theorem must be applied on the boundary. To the best of the authors’ knowledge, this problem has not been solved for nonlinearities $F$ with Lipschitz or logarithmic growth.  The problem with two controls where a hierarchy is imposed (\cite{calsavara2022new},\cite{araruna2018hierarchic},\cite{araruna2020hierarchical}) is very interesting and presents more challenges than one control problem
        
\appendix  

\section{Appendix: proof of Lemma \ref{lem1}}

\begin{proof}
    The proof follows the same steps as done in \cite{boundnullchorfi}. Define the function $\psi =e^{-s\alpha}p$ and calculate $\psi_t-\Delta\psi$ to get 

    \begin{equation}\label{carl34}
    M_1\psi+M_2\psi=f, \hspace{1cm} N_1\psi+N_2\psi=0,
\end{equation}
where 
    \begin{equation}
    \begin{array}[c]{lll}
        M_1\psi= -s\lambda^2\xi^\psi|\nabla \eta|-2s\lambda\xi\nabla \psi\cdot\nabla \eta+\partial_t\psi,\\
        M_2\psi = s^2\lambda^2\xi^2\psi|\nabla \psi|^2+\Delta \psi+s\psi\partial_t\alpha,
        \end{array}
    \end{equation}
    and
    \begin{equation}
        N_1\psi=-2s\lambda\xi\psi\partial_\nu \eta,\hspace{1cm}  N_2=\partial_\nu\psi, \hspace{0.5cm} \text{on}\Sigma,
    \end{equation}
    and finally
    \begin{equation}
     f=s\lambda \xi \Delta \eta\psi-s\lambda^2\xi|\nabla \eta|^2\psi
\end{equation}
Then, is possible to get the identities
\begin{equation}\label{cfv}
\begin{array}[c]{lll}
\|f\|^2_{L^2(Q)}+\|g\|^2_{L^2(Q)}&=\|M_1\psi\|^2_{L^2(Q)}+\|M_2\psi\|^2_{L^2(Q)}\|N_1\psi\|^2_{L^2(Q)}+\|N_2\psi\|^2_{L^2(Q)}\\
&+\displaystyle\langle M_1\psi,M_2\psi\rangle_{L^2(Q)} +\displaystyle\langle N_1\psi,N_2\psi\rangle_{L^2(\Sigma)}.
\end{array}
\end{equation}
It is important to note that 
\begin{equation}
  \langle N_1\psi,N_2\psi\rangle_{L^2(\Sigma)}=-2s\lambda\int_{\Sigma}\xi\partial_\nu\eta\psi\partial_\nu\psi\,d\Sigma.
\end{equation}

 The computation to show $\langle M_1\psi,M_2\psi\rangle_{L^2(Q)}$ explicitly follows exactly the same steps as done in  \cite{boundnullchorfi} \footnote{For simplicity we will use exactly the same notation  to guide the reader through the equations}. Recall that  $\partial_\nu \psi=s\lambda\xi \psi \partial_\nu \eta$ on $\Sigma$ it is possible to get
\begin{equation}
\begin{array}[c]{lll}
&\displaystyle\langle M_1\psi,M_2\psi \rangle_{L^2(Q)}=\\ =&\displaystyle\sum_{k=1}^{14}I_k+\sum_{k=1}^4B_{1,k}= s^{3}\lambda^{4}
\int_{\Omega_T} |\nabla \eta|^{4}\xi^{3}\psi^{2}\,dx\,dt
+ s^{3}\lambda^{3}
\int_Q \Delta\eta\,|\nabla \eta|^{2}\xi^{3}\psi^{2}\,dx\,dt \\
& \displaystyle+s^{3}\lambda^{3}
\int_Q \nabla(|\nabla \eta|^{2})\cdot\nabla\eta\,\xi^{3}\psi^{2}\,dx\,dt
-s^{2}\lambda^{2}
\int_{Q} |\nabla \eta|^{2}\partial_t\xi\,\xi\psi^{2}\,dx\,dt 
\displaystyle -s^{2}\lambda^{2}
\int_{Q} |\nabla \eta|^{2}\partial_t\alpha\,\xi\psi^{2}\,dx\,dt\\
&\displaystyle+ s^{2}\lambda
\int_{Q} \nabla(\partial_t\alpha)\cdot\nabla\xi\,\psi^{2}\,dx\,dt 
 \displaystyle+ s^{2}\lambda
\int_Q \partial_t\alpha\,\xi\,\Delta\eta\,\psi^{2}\,dx\,dt
- \frac{s}{2}
\int_Q \partial_t^{2}\alpha\,\psi^{2}\,dx\,dt \\
& \displaystyle+ d^{2}s\lambda^{2}
\int_Q \xi|\nabla \eta|^{2}|\nabla\psi|^{2}\,dx\,dt
+ 2s\lambda^{2}
\int_Q \xi\psi\,\nabla(|\nabla \eta|^2)\cdot\nabla\psi\,dx\,dt
 + 2s\lambda^3
\int_Q |\nabla \eta|^{2}\psi\,\xi\,\nabla\eta\cdot\nabla\psi\,dx\,dt\\
&\displaystyle+ 2s\lambda
\int_{\Omega_T} \xi(\nabla^{2}\eta\,\nabla\psi)\cdot\nabla\psi\,dx\,dt 
 \displaystyle- d^{2}s\lambda
\int_Q \xi\,\Delta\eta\,|\nabla\psi|^{2}\,dx\,dt
+ 2s\lambda^ 2
\int_Q \xi|\nabla\eta\cdot\nabla\psi|^{2}\,dx\,dt \\
& \displaystyle-s^{3}\lambda^{3}
\int_{\Sigma} |\nabla\eta|^{2}\xi^{3}\partial_\nu\eta\,\psi^{2}\,d\Sigma
- 2s^2\lambda^3
\int_\Sigma |\nabla\eta|^2\xi^2|\psi|^2\,d\Sigma
 - s^3\lambda^3
\int_\Sigma \xi^3|\partial_\nu \eta|^2|\psi|^2\,d\Sigma
+ s\lambda
\int_\Sigma \partial_\nu\eta\,|\nabla\psi|^{2}\xi\,d\Sigma \\
& \displaystyle
-s^2\lambda^2\int_{\Sigma} \xi^2\psi\nabla\psi\cdot\nabla \eta\,d\Sigma
- s\lambda\int_{\Sigma} \xi_t\partial_\nu\eta |\psi|^2\,d\Sigma 
- s^{2}\lambda
\int_{\Sigma} \partial_t\alpha\,\xi\,\partial_\nu\eta\,\psi^{2}\,dS\,dt .
\end{array}
\end{equation}
Now we need the next lemma
\begin{lemma}
    The next equalities holds
    \begin{equation}
        \xi\leq CT^2\xi^2,\,\,\,\,\,|\nabla \alpha|<C\lambda\xi,\,\,\,\,|\alpha_t|+|\xi_t|\leq CT\xi^2, \,\,\, |\alpha_{tt}|^2\leq CT^2\xi^2.
    \end{equation}
\end{lemma}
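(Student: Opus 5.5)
The plan is to prove each of the four estimates directly from the definitions \eqref{weigh}, treating the time factor $\theta(t)=1/(t(T-t))$ and the spatial factors separately. Write $\xi=\theta(t)e^{\lambda\eta(x)}$ and $\alpha=\theta(t)\bigl(e^{2\lambda\|\eta\|_\infty}-e^{\lambda\eta(x)}\bigr)$. Throughout, $\lambda$ is fixed with $\lambda>\lambda_*\ge 1$, and factors depending only on $\lambda$ and $\|\eta\|_{C^2}$, such as $e^{2\lambda\|\eta\|_\infty}$, are absorbed into $C$, as is done elsewhere in the paper (compare \eqref{567}).

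\textbf{First inequality, $\xi\le CT^2\xi^2$.} This is equivalent to $1\le CT^2\xi$. On $(0,T)$ we have $t(T-t)\le T^2/4$, so $\theta\ge 4/T^2$. Since $\eta\ge0$ by Lemma \ref{lem1}, also $e^{\lambda\eta}\ge1$. Hence $\xi\ge 4/T^2$, and the inequality holds with $C=1/4$. This is the same mechanism as \eqref{45a}.

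\textbf{Gradient bound.} We have $\nabla\alpha=-\lambda\theta e^{\lambda\eta}\nabla\eta=-\lambda\xi\nabla\eta$. Therefore $|\nabla\alpha|\le\|\nabla\eta\|_\infty\lambda\xi$, and $\|\nabla\eta\|_\infty$ is finite because $\eta\in C^2(\overline\Omega)$.

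\textbf{Time derivatives.} Compute $\theta'=\theta^2(2t-T)$, so $|\theta'|\le T\theta^2$. For $\xi_t=\theta' e^{\lambda\eta}$ this gives $|\xi_t|\le T\theta^2e^{\lambda\eta}\le T\xi^2$, using $e^{\lambda\eta}\le e^{2\lambda\eta}$. For $\alpha_t=\theta'\bigl(e^{2\lambda\|\eta\|_\infty}-e^{\lambda\eta}\bigr)$ we get $|\alpha_t|\le Te^{2\lambda\|\eta\|_\infty}\theta^2\le CT\xi^2$, since $\theta^2\le\xi^2$.

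\textbf{Second time derivative.} Here $\theta''=2\theta^2+2\theta^3(2t-T)^2$, which gives $|\theta''|\le 2\theta^2+2T^2\theta^3$. Using $\theta\ge4/T^2$, the first term satisfies $2\theta^2\le \tfrac12 T^2\theta^3$, so $|\theta''|\le CT^2\theta^3$. Consequently $|\alpha_{tt}|\le CT^2\xi^3$.

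\textbf{Main obstacle: the stated form of the last estimate.} The statement reads $|\alpha_{tt}|^2\le CT^2\xi^2$, but it cannot hold literally. As $t\to0$, $|\alpha_{tt}|^2$ grows like $\theta^6$ while $\xi^2$ grows only like $\theta^2$. So a proof must reinterpret the statement. The correct version, and the one actually used in the Carleman term $\frac s2\int_Q\partial_t^2\alpha\,\psi^2$, is $|\alpha_{tt}|\le CT^2\xi^3$, as in \cite{fernandez2006null}. I would prove that bound, as done above, and note the correction explicitly. Similarly, the bound $|\xi_t|\le CT\xi^2$ is exactly what is used in the boundary terms.
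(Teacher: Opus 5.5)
Your proposal is correct. The paper gives no proof of this lemma: it is stated in the appendix and used immediately. The only related computations are \eqref{45a} and \eqref{567} in the earlier weight lemma. Your direct computation from \eqref{weigh} therefore supplies the missing argument, and each step checks:
\begin{itemize}
\item $\xi\ge 4/T^2$ gives the first bound with $C=1/4$;
\item $\nabla\alpha=-\lambda\xi\nabla\eta$;
\item $\theta'=(2t-T)\theta^2$;
\item $\theta''=2\theta^2+2(2t-T)^2\theta^3$, together with $\theta\ge 4/T^2$, gives $|\theta''|\le\frac52T^2\theta^3$.
\end{itemize}
You are also right that the last estimate, as printed, is false. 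Since $e^{2\lambda\|\eta\|_\infty}-e^{\lambda\eta}\ge e^{2\lambda\|\eta\|_\infty}-e^{\lambda\|\eta\|_\infty}>0$, the left side grows like $\theta^6$ as $t\to0$, while $\xi^2$ grows only like $\theta^2$. The intended bound, and the one needed for the term $\frac s2\int_Q\partial_t^2\alpha\,\psi^2$, is $|\alpha_{tt}|\le CT^2\xi^3$.

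One refinement concerns the constants in the two $\alpha$-estimates. Their $\lambda$-dependence is not a convenience you may absorb; it is unavoidable. At a point of $\Gamma\setminus\gamma$, where $\eta=0$, one has $|\alpha_t|/\xi^2=|2t-T|\,(e^{2\lambda\|\eta\|_\infty}-1)$. Any admissible $C$ therefore satisfies $C\ge e^{2\lambda\|\eta\|_\infty}-1$.

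It is better to keep the factor explicit, as \eqref{567} does: $|\alpha_t|\le e^{2\lambda\|\eta\|_\infty}T\xi^2$ and $|\alpha_{tt}|\le\frac52 e^{2\lambda\|\eta\|_\infty}T^2\xi^3$. This factor is exactly what forces the admissible range of $s$ to depend on $\lambda$ when these terms are absorbed into $s^3\lambda^4\int_Q\xi^3|\psi|^2$. By contrast, your bounds for $\xi$, $\nabla\alpha$ and $\xi_t$ hold with constants independent of $\lambda$: $\frac14$, $\|\nabla\eta\|_\infty$ and $1$.
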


Then, we have for $s,\lambda$ long enough 
\begin{equation}
    \begin{array}[c]{lll}
     \displaystyle \sum_{k=1}^{14}I_k &\geq\displaystyle C\left( s^3\lambda^4\int_Q\xi^3|\psi|^2\,dxdt-s^2\lambda^2\int_Q\xi^3|\psi|^2\,dxdt+s\lambda^2\int_Q\xi|\nabla\psi|^2\,dxdt\right.\\
      &\left.\displaystyle-s^2\lambda^4\int_Q\xi^2|\psi|^2\,dxdt -s\lambda\int_Q\xi|\nabla \psi|^2\,dxdt  \right),\\
      &\displaystyle\geq C\left( s^3\lambda^4\int_Q\xi^3|\psi|^2\,dxdt+s\lambda^2\int_Q\xi|\nabla\psi|^2\,dxdt\right)
    \end{array}
\end{equation}

Now we have
\begin{equation}
    -s^{3}\lambda^{3}
\int_{\Sigma} |\nabla\eta|^{2}\xi^{3}\partial_\nu\eta\,\psi^{2}\,d\Sigma\geq -s^{3}\lambda^{3}
\int_{\Gamma/\gamma\times(0,T)}\!\!\!\!\!\!\!\xi^{3}\partial_\nu\eta\,\psi^{2}\,d\Sigma-s^{3}\lambda^{3}
\int_{\gamma\times (0,T)}\!\!\!\!\!\!\!\!\! \xi^{3}\,\psi^{2}\,d\Sigma.
\end{equation}
Next 
\begin{equation}
     - s^3\lambda^3
\int_\Sigma \xi^3|\partial_\nu \eta|^2|\psi|^2\,d\Sigma \geq -  s^3\lambda^3c^2_0
\int_\Sigma \xi^3|\psi|^2\,d\Sigma - s^3\lambda^3\|\partial_\nu\eta\|^2_\infty
\int_{\gamma\times(0,T)}
 \xi^3|\psi|^2\,d\Sigma
\end{equation}
\begin{equation}
     s\lambda
\int_\Sigma \partial_\nu\eta\,|\nabla\psi|^{2}\xi\,d\Sigma \geq   s\lambda
\int_\Sigma \partial_\nu\eta\,|\nabla\psi|^{2}\xi\,d\Sigma
\end{equation}
Proceed in the same way to bound the other boundary integrals. From these estimates it is possible to get 
\begin{equation}
\begin{array}[c]{lll}
    \sum_{k=1}^7B_k &\geq \displaystyle  Cs^2\lambda^3\int_\Sigma\xi^2|\psi|^2\,dxdt +Cs^3\lambda^3\int_\Sigma\xi ^3|\nabla \psi|^2\,dxdt
         +Cs^{-1}\int_{\Sigma}\xi^{-1}|\Delta  \psi|^2\,d\Sigma\\
         &-\displaystyle s^3\lambda^3\int_{\gamma\times (0,T)} \xi^3|\psi|^2\,d\Sigma-Cs\lambda \int_{\gamma\times (0,T)}\xi|\nabla \psi|^2\,d\Sigma 
         \end{array}
\end{equation}

Then we have
\begin{equation}
    \begin{array}[c]{lll}
        \displaystyle &\langle M_1\psi,M_2\psi \rangle_{L^2(Q)}\displaystyle+ \langle N_1\psi,N_2\psi \rangle_{L^2(\Sigma)}+ \\
       &\displaystyle+\displaystyle s^3\lambda^3\int_{\gamma\times (0,T)} \xi^3|\psi|^2\,d\Sigma+Cs\lambda \int_{\gamma\times (0,T)}\xi|\nabla \psi|^2\,d\Sigma \geq\\
       & \displaystyle Cs^2\lambda^3\int_\Sigma\xi^2|\psi|^2\,dxdt +Cs^3\lambda^3\int_\Sigma\xi ^3|\nabla \psi|^2\,dxdt
         + Cs^{-1}\int_{\Sigma}\xi^{-1}|\Delta  \psi|^2\,d\Sigma\\
         &+\displaystyle s^3\lambda^4\int_Q\xi^3|\psi|^2\,dxdt+s\lambda^2\int_Q\xi|\nabla\psi|^2\,dxdt
    \end{array}
\end{equation}

Then, we have that 
\begin{equation}
    \displaystyle Cs^{-1}\int_{\Sigma}\xi^{-1}|\Delta  \psi|^2\,d\Sigma\leq \frac{1}{2}\|N_1\psi\|^2+Cs^3\lambda^2\int_{\Sigma}\xi^3|\psi|^2\,d\Sigma+Cs^3\lambda^4\int_{\gamma\times(0,T)}\!\!\!\!\!\!\!\!\!\!\xi^3|\psi|^2\,d\Sigma.
\end{equation}

From \eqref{cfv} and  the above equation we have
\begin{equation}\label{sdf}
    \begin{array}[c]{lll}
  \|M_1\psi\|^2_{L^2(Q)}&\displaystyle+\|M_2\psi\|^2_{L^2(Q)}  +\|N_1\psi\|^2_{L^2(\Sigma)}+\|N_2\psi\|^2_{L^2(\Sigma)}+Cs^2\lambda^3 \int_{\Sigma}\xi^2|\psi|^2\,d\Sigma \\
  &\displaystyle+Cs^3\lambda^3\int_{\Sigma}\xi^3|\psi|^2 \,d\Sigma +s^3\lambda^4\int_Q\xi^3|\psi|^2\,dxdt+s\lambda\int_Q\xi|\nabla \psi|\,dxdt \\
  &\displaystyle   + \displaystyle s^3\lambda^3\int_{\gamma\times (0,T)}.\xi^3|\psi|^2\,d\Sigma 
    \end{array}
\end{equation}
Following the same arguments as in \cite{guerrero} we can estimate the first two terms of \eqref{sdf} to get
\begin{equation}\label{sdf2}
    \begin{array}[c]{lll}
\displaystyle s^{-1}\int_Q \xi^{-1}(|\psi_t|^2+|\Delta \psi|^2)\,dxdt&\displaystyle+Cs^2\lambda^3 \int_{\Sigma}\xi^2|\psi|^2\,d\Sigma \\
  &\displaystyle+Cs^3\lambda^3\int_{\Sigma}\xi^3|\psi|^2 \,d\Sigma +s^3\lambda^4\int_Q\xi^3\lambda^4\,dxdt+s\lambda\int_Q\xi|\nabla \psi|\,dxdt \\
         &\displaystyle\leq \displaystyle+s\lambda T\int_{\Sigma}\xi^2|\psi|^2\,d\Sigma
        \displaystyle + \displaystyle s^3\lambda^4\int_{\gamma\times (0,T)} \xi^3|\psi|^2\,d\Sigma \\
      & \displaystyle \leq s^3\lambda^4\int_{\gamma\times(0,T)}\xi^3|\psi|^2\,dxdt
    \end{array}
\end{equation}

Only remain to estimate the first integral respect to $\varphi$ to have the desired inequality.  We can see that $|\psi_t|^2\geq |e^{-s\alpha}p_t|^2$ and the same for the gradient  $\nabla \psi$, so from this conclusion it is possible to get from \eqref{sdf2} the desired inequality. Finally using $|F_1|=|F_1-\nabla \cdot F_2+\nabla \cdot  F_2|$ and by Young inequality is possible to get the term
\begin{equation}
    \int_Q\varrho^{-2}|F_1|^2\,dxdt\leq \int_{Q}\varrho^{-2}|p_t+\Delta p|^2\,dxdt +\int_Q \varrho^{-2}|F_2|^2\,dxdt
\end{equation}
That consider the desired form of the inequality.

\end{proof}

\bibliographystyle{unsrt}
\bibliography{biblio}
\end{document}